\newcommand{\eps}{\varepsilon}
\renewcommand{\le}{\leqslant}
\renewcommand{\ge}{\geqslant}
\newcommand{\E}{\ensuremath{\mathsf{E}}}
\newcommand{\Var}{\ensuremath{\mathsf{Var}}}
\newcommand{\Prb}{\ensuremath{\mathsf{P}}}
\newcommand{\ff}{\mathcal F}
\newcommand{\GG}{\mathcal G}
\newtheorem{thm}{Theorem}
\newtheorem{prob}[thm]{Problem}
\newtheorem{cor}[thm]{Corollary}
\newtheorem{prop}[thm]{Proposition}
\newtheorem{conj}{Conjecture}
\begin{document}
\title{Rainbow matchings in $k$-partite hypergraphs}
\author{Sergei Kiselev\footnote{Laboratory of Combinatorial and Geometric Structures, Moscow Institute of Physics and Technology, Email: {\tt kiselev.sg@gmail.com}}, Andrey Kupavskii\footnote{G-SCOP, CNRS, Grenoble; Laboratory of Combinatorial and Geometric Structures, Moscow Institute of Physics and Technology; IAS Princeton, Email: {\tt kupavskii@ya.ru}}}

\maketitle
\begin{abstract}
  In this paper, we prove a conjecture of Aharoni and Howard on the existence of rainbow (transversal) matchings in sufficiently large families  $\ff_1,\ldots, \ff_s$ of tuples in $\{1,\ldots, n\}^k$, provided $s\ge 470.$
\end{abstract}
\section{Introduction}

Let $[n]:=\{1,\ldots,n\}$. In this paper, we study families of $k$-tuples $\ff\subset [n]^k$. We say that $F = (f_1,\ldots, f_k)$ and $F' = (f'_1,\ldots, f'_k)$, $F,F' \in [n]^k,$ \emph{intersect} iff for some $1 \le i \le k$ we have $f_i = f_i'$. Note that $[n]^k$ can be seen as the complete $k$-partite $k$-uniform hypergraph with parts of size $n$. The following conjecture was made by Aharoni and Howard \cite{AH}:
\begin{conj}\label{conj1} Let $n, s$ and $k$ be positive integers. If $\ff_1,\ldots,\ff_s\subset [n]^k$ satisfy $|\ff_i|>(s-1)n^{k-1}$ for all $i$ then there exist $F_1\in\ff_1,\ldots, F_s\in \ff_s,$ such that $F_i\cap F_j = \emptyset$ for any $1\le i<j\le s.$
\end{conj}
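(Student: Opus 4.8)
\medskip
\noindent\textbf{Proof proposal.}

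The threshold is forced by the diagonal case $\ff_1=\cdots=\ff_s$, so I would settle that first, together with its stability form. Identify $[n]$ with $\Z/n\Z$ and give each tuple $F=(f_1,\dots,f_k)$ the \emph{type} $(f_2-f_1,\dots,f_k-f_1)\in(\Z/n\Z)^{k-1}$: the $n^{k-1}$ classes $M_\tau=\{(x,x+\tau_1,\dots,x+\tau_{k-1}):x\in\Z/n\Z\}$ are perfect matchings partitioning $[n]^k$, so if $\nu(\ff)\le s-1$ then $|\ff\cap M_\tau|\le s-1$ for all $\tau$ (each $\ff\cap M_\tau$ is a matching, so $|\ff\cap M_\tau|\le\nu(\ff)\le s-1$), whence $|\ff|\le(s-1)n^{k-1}$. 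Calling a set $\{F:f_c=v\}$ a \emph{line}, I would then prove the companion stability statement: if $\nu(\ff)\le s-1$ and $|\ff|\ge(s-1)n^{k-1}-\delta n^{k-1}$, then $\ff$ differs in at most $O_k(\delta)\,n^{k-1}$ tuples from a union of $s-1$ lines; one shows $\ff$ must be within $o(1)$ of full inside most classes $M_\tau$ while remaining a matching in each, that this forces the ``profiles'' $\ff\cap M_\tau$ to line up, and then upgrades the conclusion by supersaturation.

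Now suppose, for contradiction, that there is no rainbow matching. Fix a rainbow partial matching $\mathcal M=(F_1,\dots,F_t)$ with $F_i\in\ff_i$ (after reindexing) of maximum size $t\le s-1$; put $C_c=\{(F_i)_c:i\le t\}$, so $|C_c|=t$ for each $c$. Every $F$ in every unused family $\ff_j$, $t<j\le s$, meets $\bigcup_iF_i$, so $\ff_j\subseteq\{F:\exists c,\ f_c\in C_c\}$ and $|\ff_j|\le n^k-(n-t)^k$; together with $|\ff_j|>(s-1)n^{k-1}$ this already gives a contradiction unless $t\gtrsim(s-1)/k$. Maximality yields more, namely that no \emph{augmenting P\'osa rotation} is possible: if $F\in\ff_j$ ($j>t$) meets $\mathcal M$ only in $F_i$, then no $F'\in\ff_i$ avoids $(\mathcal M\setminus\{F_i\})\cup\{F\}$, and iterating confines, for each unused $j$, a subfamily of size $\Theta(|\ff_j|)$ of $\ff_j$ to the $\le kt$ lines $\{f_c\in C_c\}$ together with the lines through the $O(s)$ tuples exchanged along the rotations.

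The argument then forks. Pushing the rotation bookkeeping quantitatively, I expect that unless the configuration is close to an extremal one, the families reachable by rotations get squeezed strictly below $(s-1)n^{k-1}$ -- a contradiction -- which should dispose of all cases except the rigid one: $t=s-1$ and $n$ large. There $\mathcal M$ has size $s-1$, every $F\in\ff_s$ meets it, and the rotation constraints show every $\ff_i$, after deleting a bounded number of ``frozen'' lines, is crammed into at most $s-2$ lines -- i.e. every $\ff_i$ is $\delta n^{k-1}$-close to the extremal configuration of the first paragraph. One last dichotomy: either some $\ff_i$ is genuinely far from every union of $s-1$ lines, in which case its rotation constraints cannot all hold; or all $s$ families lie $\delta n^{k-1}$-close to unions of $s-1$ lines, and I finish by hand -- from the cheap description of $\ff_i$ greedily select a line $\{f_{c_i}=v_i\}$ so that within each coordinate the chosen values are distinct (possible because each description is essentially $s-1$ full lines and only $s$ selections are needed), then build a rainbow matching inside the chosen lines tuple by tuple, the $\delta n^{k-1}$ exceptional tuples being harmless because $n$ is large and $\delta=\delta(s)$ small. (When $n$ is not large, every $\ff_i$ has density $\ge(s-1)/n$ bounded below, and a separate dense/absorption argument applies.)

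The crux -- and, I expect, the origin of the hypothesis ``$s$ large'', i.e. the paper's $s\ge470$ -- is making the two forks meet: turning ``no augmenting rotation'' into a closeness-to-extremal bound with error $\delta(s)$ small enough that the greedy choice of lines cannot fail, while at the same time covering the whole range $(s-1)/k\lesssim t\le s-1$ left open by the crude estimate $|\ff_j|\le n^k-(n-t)^k$. An alternative I would keep in reserve is the topological one: by the Aharoni--Haxell hypergraph Hall theorem it suffices that the matching complex of every $\bigcup_{i\in S}\ff_i$ be roughly $(|S|-1)$-connected; such a union contains some $\ff_i$ and hence has matching number $\ge s\ge|S|$, and for near-extremal $k$-partite hypergraphs the matching complex is about $(\nu-1)$-connected, so this \emph{almost} closes -- the obstruction being that the off-the-shelf connectivity estimate for $k$-uniform hypergraphs gives only $\sim\nu/k$, so here again one needs a sharper, density-sensitive connectivity bound, which might itself be the cleanest way to extract the constant.
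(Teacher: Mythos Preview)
Your proposal is not a proof but a research outline, and several of its load-bearing steps are left as hopes rather than arguments. The stability statement for the diagonal case (``if $\nu(\ff)\le s-1$ and $|\ff|\ge(s-1-\delta)n^{k-1}$ then $\ff$ is $O_k(\delta)n^{k-1}$-close to a union of $s-1$ lines'') is asserted with only a one-line sketch; this is not obvious, and the ``profiles line up'' step in particular needs a real argument. The P\'osa rotation phase is even softer: ``pushing the rotation bookkeeping quantitatively, I expect that\ldots'' and ``should dispose of all cases except\ldots'' are not proofs, and you give no mechanism for why the reachable families get squeezed strictly below $(s-1)n^{k-1}$. The endgame (``greedily select a line\ldots then build a rainbow matching inside the chosen lines'') silently assumes that the $s$ approximate line-configurations are compatible enough to admit a system of distinct representatives across coordinates, which is exactly the kind of thing that fails near the extremal example. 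Finally, ``when $n$ is not large\ldots a separate dense/absorption argument applies'' is a placeholder, not an argument.

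More importantly, the paper takes a completely different route that bypasses all of this structural analysis. It never looks at augmenting paths or stability. Instead it samples a uniformly random perfect matching $\mathcal M$ of $[n]^k$, sets $\zeta_i=|\ff_i\cap\mathcal M|-(s-1)$, and proves a sharp martingale concentration inequality (via the spectrum of the Kneser-type graph $PG_{n,k}$ and a Chung--Lu martingale bound) showing that each $\zeta_i$ is tightly concentrated around its positive mean. If no rainbow matching exists, then for every $\mathcal M$ the bipartite incidence graph between $\{\ff_i\}$ and $\mathcal M$ violates Hall's condition, which forces many $\zeta_i$ to be simultaneously quite negative; the concentration bounds then yield a numerical contradiction once $s\ge470$. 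This is where the threshold on $s$ actually comes from --- not from making a rotation/stability dichotomy close, but from balancing the tail of the concentration inequality against the Hall-violation deficit. Your guess about the origin of the constant is therefore off: the paper's argument is purely probabilistic and global, with no extremal-structure analysis at all.
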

We call any such collection of $F_i$ a {\it rainbow $s$-matching.} If true, the bound on $|\ff_i|$ in the conjecture is best possible: consider the families $\ff_1 = \ldots = \ff_s = \{F=(f_1,\ldots, f_k)\in [n]^k: f_1\in [s-1]\}$. In their paper, Aharoni and Howard proved this conjecture for $k=2,3$. Later, Lu and Yu \cite{LY} proved it for $n>3(s-1)(k-1).$

The main result of this paper is the proof of Conjecture~\ref{conj1} for all $s\ge s_0.$

\begin{thm}\label{thm1}
There exists $s_0$ such that Conjecture~\ref{conj1} is true for any $s\ge s_0$.
\end{thm}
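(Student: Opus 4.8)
\emph{Proof idea.} First dispose of the easy regimes. The statement is vacuous unless $s\le n$, since for $s\ge n+1$ one has $(s-1)n^{k-1}\ge n^{k}=|[n]^{k}|$; so assume $s\le n$. By the theorem of Lu and Yu we may assume $n\le 3(s-1)(k-1)$, and by the cases $k=2,3$ of Aharoni and Howard we may assume $k\ge 4$; in particular $n\ge s\ge s_{0}$ is large and $n=O(sk)$. It is convenient to restate the goal: a rainbow $s$-matching is exactly an $s\times k$ array $A$ with entries in $[n]$ in which every column has $s$ pairwise distinct entries and the $i$-th row lies in $\ff_{i}$ for every $i$.

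When $n=s$ a one-line probabilistic argument works: let each column of $A$ be an independent uniformly random injection $[s]\hookrightarrow[n]$; then each row is uniform in $[n]^{k}$, so $\Pr[\text{row }i\notin\ff_{i}]<1-(s-1)/n=1/s$ by the strict hypothesis, and the union bound over the $s$ rows gives probability $<1$ of a bad row. For $n>s$ this bound is too weak --- a uniformly random row ignores the concentration of $\ff_{i}$ near the extremal configuration --- so one must use structure.

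The core of the argument is a stability dichotomy for a single large family: if $|\ff|>(s-1)n^{k-1}$ then either $\ff$ is \emph{flexible}, meaning $\nu(\ff)>k(s-1)$, so that $\ff$ can absorb a tuple greedily after all other tuples have been placed (deleting the tuples through the $\le k(s-1)$ forbidden coordinate-values lowers $\nu$ by at most $k(s-1)$, leaving it positive); or $\ff$ is \emph{concentrated}: there are a coordinate $c$ and a set $S\subseteq[n]$ of size bounded in terms of $s$ such that all but a small fraction of $\ff$ lies in $\{F:f_{c}\in S\}$ and, conversely, all but a small fraction of $\{F:f_{c}\in S\}$ lies in $\ff$. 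Proving this with the right quantitative control --- an error fraction small compared with $1/s$ --- by an iterated ``pull out the highest-degree coordinate-value'' procedure together with a compression/shifting argument, is the technical heart. Granting it, one fills the array in stages: for the concentrated families choose pairwise distinct representatives $v_{i}\in S_{i}$ in their distinguished coordinates (a defect Hall argument on the $S_{i}$; the sole obstruction is that they all coincide with $|S_{i}|=s-1$, in which case exactly one family must instead use a tuple outside $\{F:f_{c}\in S\}$, which exists precisely because $|\ff_{i}|>(s-1)n^{k-1}$); then fill the remaining $k-1$ coordinates using that the relevant links of the concentrated families are nearly complete, which is the same problem in dimension $k-1$ (or, once $n$ is close enough to $s$, the union bound above); finally place the flexible families greedily.

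The main obstacle I anticipate is reconciling these pieces when $k$ is large. There the greedy/absorption step can break: a family with more than $(s-1)n^{k-1}$ tuples may, once a single coordinate is pinned to a value, be confined to a vanishing fraction of $[n]^{k-1}$, so ``fill the remaining coordinates'' cannot be performed one family at a time but must be done for all concentrated families at once, with the error terms from the stability lemma controlled uniformly in $k$; making a union bound over the $s$ families beat $1$ at that stage is where the hypothesis $s\ge s_{0}$ enters.
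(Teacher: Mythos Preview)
Your approach is fundamentally different from the paper's, and the central stability dichotomy on which you build is false as stated. Take $\ff=\{F:f_{1}\in[s-1]\}\cup\{F:f_{2}\in[s-1]\}$. For $n>s-1$ one has $|\ff|=2(s-1)n^{k-1}-(s-1)^{2}n^{k-2}>(s-1)n^{k-1}$, while $\nu(\ff)\le 2(s-1)<k(s-1)$ once $k\ge 3$, so by your criterion $\ff$ must be ``concentrated''. But for any single coordinate $c$ and any $S$ with $|S|=O(s)$, a constant fraction of $\ff$ (namely $\tfrac{n-|S|}{2n-(s-1)}\ge\tfrac13$ of it, for $n\le 3(s-1)(k-1)$ and $|S|\le s$) lies outside $\{F:f_{c}\in S\}$. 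So a family of small matching number need not be close to a one-coordinate dictator; the correct structural statement is only that $\ff$ is covered by $O(\nu k)$ coordinate-value stars spread over \emph{several} coordinates, and then your single-coordinate Hall/representative step does not get off the ground. You flag the link step as the anticipated obstacle, but the failure is earlier, at the dichotomy itself. Junta-type stability along these lines has been developed (Keller--Lifshitz, Frankl--Kupavskii), but precisely because the junta may span many coordinates it yields bounds of the form $n>f(s)\cdot k$ rather than a statement uniform in $n$ and $k$.

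The paper avoids structure altogether. It chooses a uniformly random perfect matching $\mathcal M$ in $[n]^{k}$, sets $\zeta_{i}=|\ff_{i}\cap\mathcal M|-(s-1)$, and proves a sharp concentration inequality for $\zeta_{i}$ via an exposure martingale whose increments are controlled by the spectral gap of the product graph $PG_{n,k}$ (Alon--Chung together with the Chung--Lu martingale inequality). If no rainbow $s$-matching exists, then for \emph{every} $\mathcal M$ Hall's condition fails in the bipartite graph between edges of $\mathcal M$ and the families: there is $j_{\mathcal M}\in\{0,\dots,s-1\}$ with $\zeta_{i}\le -j_{\mathcal M}$ for at least $s-j_{\mathcal M}$ indices $i$. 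Playing this deterministic deficit $\sum_{i}\zeta_{i}\,I[\zeta_{i}\le 0]\le -j_{\mathcal M}(s-j_{\mathcal M})$ off against $\E\zeta_{i}>0$, and feeding in the tail bounds (including a conditional bound comparing moderate with very large deviations), gives a contradiction once $s\ge s_{0}$ (their calculation gives $s_{0}=470$). The argument is completely insensitive to the shape of the $\ff_{i}$ and is uniform in $n$ and $k$, which is exactly what your stability route struggles to achieve.
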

Although we haven't put much effort into optimizing $s_0$, our proof allows us to take  $s_0 = 470$.

The proof of Theorem~\ref{thm1} relies on the idea that intersection of any family with a random matching is highly concentrated around its expectation. This idea was introduced in the paper of Frankl and the second author \cite{FK3} in the context of the Erd\H os Matching Conjecture (Erd\H os, \cite{EMC}). Let ${[n]\choose k}$ stand for the collection of all $k$-element subsets of $[n]$. The EMC states that the largest family $\ff\subset {[n]\choose k}$ with no $s$ pairwise disjoint sets for $n\ge sk$ has size at most $f(n,k,s):=\max\big\{{n\choose k}-{n-s+1\choose k}, {sk-1\choose k}\big\}$. 
The conjecture is proven in some ranges of parameters. For somewhat large $n$, the best results on the conjecture is due to Frankl \cite{F4}, who showed the validity of the conjecture for roughly $n>2sk$ and Frankl and the second author \cite{FK3}, who showed the conjecture is valid for $n>\frac 53 sk$ and $s>s_0$. For $n$ close to $sk$, the only nontrivial result is due to Frankl \cite{F5}, who showed the validity of the conjecture for $n<s(k+\epsilon),$ where $\epsilon=\epsilon(k)$ is roughly $k^{-k}$. A more general problem was studied in \cite{FK5}.

Interestingly, the 'one-family' version of the conjecture of Aharoni and Howard is almost trivial. Indeed, let us show that any family $\ff\subset [n]^k$ of size greater than $(s-1)n^{k-1}$ has a matching of size $s$. Consider a perfect matching in $[n]^k$ taken uniformly at random. Obviously, it consists of $n$ edges. Thus, via simple averaging, the expected intersection of $\ff$ with this matching is  $n\cdot \frac {|\ff|}{n^k}>s-1.$ Therefore, there exists a perfect matching that has at least $s$ sets in the intersection with $\ff$. This should explain the intuition behind using the concentration for the intersections of families with a random matching. Indeed, if for each of the families $\ff_1,\ldots,\ff_s$ their intersection with a perfect matching is either of size $s-1$ or $s$, then essentially the same argument as above will imply that we can find a rainbow $s$-matching in a fixed perfect matching, as long as one of the families intersects it in $s$ sets.

Huang, Loh and Sudakov \cite{HLS}, as well as Aharoni and Howard \cite{AH} suggested the $s$-family analogue of the EMC: any $s$ families $\ff_1,\ldots, \ff_s\subset {[n]\choose k}$ with $\min_{i}|\ff_i|>f(n,k,s)$ contain a rainbow $s$-matching. (We refer to it as to {\it rainbow }EMC later on.) Huang, Loh and Sudakov proved it in the very same range: for $n>3sk^2$. (Later, their approach was transposed in the aforementioned paper of Lu and Yu \cite{LY} to progress on Conjecture~\ref{conj1}. Note here that the bound $n>3sk$ in the case of $[n]^k$ corresponds to the bound $n>3sk^2$ in the case of ${[n]\choose k}$ since the ground set has size $nk$ in the former case.) The approaches of \cite{F4} and \cite{FK3}, unfortunately, do not seem to work for the rainbow version of the EMC. Using a junta method, Keller and Lifshitz \cite{KL} showed the validity of the rainbow EMC for $n>f(s)k,$ where $f(s)$ is an unspecified and a very quickly growing function of $s$. Together with Frankl \cite{FK4}, the second author managed to prove a much better junta approximation for shifted families and to show the validity of the rainbow EMC for $n>12sk(2+\log s).$ We also note that the validity of the conjecture for $n>Csk$ with some large unspecified $C$ is announced by Keevash, Lifshitz, Long and Minzer.

In the next section, we give the necessary preliminaries and prove the aforementioned concentration inequality. In Section~\ref{sec3}, we give the proof of Theorem~\ref{thm1}.


\section{Concentration for intersections with a random matching}
We start with some results concerning the eigenvalues of graphs and their relation to their quasirandomness properties, as well as concentration results for martingales.


Consider the graph $PG_{n,k}$ on the vertex set $[n]^k$ and with the edge set consisting of all pairs of disjoint sets. It is not difficult to see that $PG_{n,k}$ is the direct product of $k$ copies of $K_n$, the complete graph on $n$ vertices. Thus, the adjacency matrix of $PG_{n,k}$ is the $k$-th Kronecker power of $(J_n-E_n)$, the adjacency matrix of $K_n$. For two $n\times n$ matrices $A$ and $B$ with eigenvalues $\lambda_i$ and $\mu_j$, respectively, the eigenvalues of their Kronecker product $A\otimes B$ are $\lambda_i\mu_j$.

This implies the following proposition.
\begin{prop}
The largest eigenvalue of $PG_{n,k}$ is equal to $D := (n-1)^{k}$ and the second largest absolute value of an eigenvalue is equal to $\lambda_2 := (n-1)^{k-1}$.
\end{prop}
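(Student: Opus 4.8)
The plan is to read off the entire spectrum of $PG_{n,k}$ from the two facts already assembled just before the statement: that $PG_{n,k}$ is the $k$-fold Kronecker (tensor) power of $K_n$, and that the eigenvalues of a Kronecker product $A\otimes B$ are the pairwise products $\lambda_i\mu_j$ of the eigenvalues of the factors. First I would recall the spectrum of a single $K_n$: its adjacency matrix is $J_n-E_n$, whose eigenvalues are $n-1$, with the all-ones eigenvector, and $-1$, with multiplicity $n-1$ (on the orthogonal complement). In particular the eigenvalue of largest absolute value, $n-1$, is simple.

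Applying the Kronecker-product rule across the $k$ factors, every eigenvalue of $PG_{n,k}$ is a product $\prod_{i=1}^{k}\mu_i$ with each $\mu_i\in\{n-1,-1\}$. Choosing $t$ of the factors to contribute $n-1$ and the remaining $k-t$ to contribute $-1$ yields the eigenvalue $(-1)^{k-t}(n-1)^t$ for $t=0,1,\dots,k$, of absolute value $(n-1)^t$. Since $n-1\ge 1$, this is a non-decreasing function of $t$, maximized at $t=k$, where the sign is positive and the associated eigenspace is one-dimensional (the tensor product of the one-dimensional top eigenspaces of the factors). Hence the largest eigenvalue is $D=(n-1)^{k}$, and it is simple and strictly dominates every other eigenvalue in absolute value (for $n\ge 3$; for $n=2$ all nonzero eigenvalues have absolute value $1=D$, but then $D$ is still the spectral radius). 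The next-largest value of $(n-1)^t$ occurs at $t=k-1$, giving $\lambda_2=(n-1)^{k-1}$ as the second largest absolute value of an eigenvalue, realized by $-(n-1)^{k-1}$.

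Since this is a direct computation, I do not expect a genuine obstacle; the only points worth stating carefully are that the top eigenvalue of $K_n$ is simple (so that $D$ is simple and the strict gap to $\lambda_2$ holds when $n\ge 3$), and the mild degeneracy of the case $n=2$, where every nonzero eigenvalue has absolute value $1=(n-1)^{k-1}$ and the claimed values of $D$ and $\lambda_2$ still hold.
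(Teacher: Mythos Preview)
Your proof is correct and follows essentially the same approach as the paper: both compute the spectrum of $K_n$ as $\{n-1,-1\}$ and then read off the eigenvalues of $PG_{n,k}$ as all $k$-fold products via the Kronecker-power rule stated just before the proposition. Your version is more detailed (tracking multiplicities, simplicity of the top eigenspace, and the $n=2$ degeneracy), but the underlying argument is identical.
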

\begin{proof} This readily follows from the paragraph above and the fact that the adjacency matrix of $K_n$ has the form $J_n-E_n$ and its eigenvalues are $(n-1)$, with multiplicity $1$, and $-1$, with multiplicity $(n-1)$.\end{proof}

Fix integers $n, k$. Let $\mathcal G \subset [n]^k$ be a family and set $\alpha := |\mathcal G| / n^k$. Take two disjoint sets $S_1,S_2\in [n]^k$ uniformly at random and let $A_i$ be the event that $S_i\in \GG$. 
Now we will need the following result of Alon and Chung \cite{AC}:
\begin{thm}
\label{alon_chung}
Let $G = (V, E)$ be a $D$-regular graph on $m$ vertices, let $\lambda_2(G)$ be the second largest absolute value of the eigenvalues of $G$ and let $S$ be a subset of vertices $G$ of cardinality $|S| = \alpha m$. Then the following holds:
$$
\left|\frac{2e(S)}{Dm} - \alpha^2 \right| \le 
\frac{\lambda_2(G) \alpha (1 - \alpha)}{D},
$$
where $e(S)$ is the number of edges which join two vertices of $S$.
\end{thm}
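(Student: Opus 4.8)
\medskip
\noindent\textbf{Proof proposal.} The statement is the classical expander mixing lemma, so the plan is to run the standard spectral argument. Let $A$ be the adjacency matrix of $G$ and let $\mathbf{1}_S\in\{0,1\}^{V}$ be the indicator vector of $S$. Since $G$ is simple, $A$ has zero diagonal, so $\mathbf{1}_S^{\top}A\,\mathbf{1}_S = 2e(S)$. Because $A$ is real symmetric, I would fix an orthonormal eigenbasis $v_1,\dots,v_m$ of $\mathbb{R}^{V}$ with $Av_i=\lambda_i v_i$; using $D$-regularity I take $v_1 = m^{-1/2}\mathbf{1}$, so that $\lambda_1 = D$, and by the very definition of $\lambda_2(G)$ every remaining eigenvalue satisfies $|\lambda_i|\le\lambda_2(G)$ (any extra eigenvectors for $\pm D$, which appear when $G$ is disconnected or bipartite, are orthogonal to $\mathbf{1}$ and do no harm, since then $\lambda_2(G)=D$ anyway).

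Next I would expand $\mathbf{1}_S = \sum_{i=1}^{m} c_i v_i$ with $c_i = \langle \mathbf{1}_S, v_i\rangle$ and record two elementary identities: first $c_1 = m^{-1/2}\langle\mathbf{1}_S,\mathbf{1}\rangle = |S|/\sqrt{m} = \alpha\sqrt{m}$; second, by Parseval, $\sum_{i=1}^{m} c_i^2 = \|\mathbf{1}_S\|_2^2 = |S| = \alpha m$, hence $\sum_{i\ge 2} c_i^2 = \alpha m - \alpha^2 m = \alpha(1-\alpha)m$. Then
$$2e(S) = \mathbf{1}_S^{\top}A\,\mathbf{1}_S = \sum_{i=1}^{m} \lambda_i c_i^2 = D\alpha^2 m + \sum_{i\ge 2}\lambda_i c_i^2 ,$$
so that, dividing by $Dm$ and using $|\lambda_i|\le\lambda_2(G)$ for $i\ge 2$,
$$\left|\frac{2e(S)}{Dm} - \alpha^2\right| = \frac{1}{Dm}\left|\sum_{i\ge 2}\lambda_i c_i^2\right| \le \frac{\lambda_2(G)}{Dm}\sum_{i\ge 2} c_i^2 = \frac{\lambda_2(G)\,\alpha(1-\alpha)}{D},$$
which is exactly the claimed bound.

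There is essentially no serious obstacle here; the whole content is the observation that $\mathbf{1}_S^{\top}A\,\mathbf{1}_S$ counts ordered edges inside $S$ and that projecting onto the top eigenvector $v_1$ extracts precisely the $\alpha^2$ main term, while the tail is controlled by the spectral gap. The only points needing a little care are the bookkeeping of the $\pm D$ eigenvalues in the disconnected or bipartite case and the vanishing of the diagonal of $A$ (so that no loop terms pollute $\mathbf{1}_S^{\top}A\,\mathbf{1}_S$). If one prefers, one can instead apply the estimate to the mean-zero vector $\mathbf{1}_S - \alpha\mathbf{1}$, which lies in the orthogonal complement of $v_1$, and bound $|\langle \mathbf{1}_S-\alpha\mathbf{1},\, A(\mathbf{1}_S-\alpha\mathbf{1})\rangle|\le \lambda_2(G)\,\|\mathbf{1}_S-\alpha\mathbf{1}\|_2^2$ directly by Cauchy--Schwarz; this is the form in which the lemma is most often quoted, and it gives the same conclusion.
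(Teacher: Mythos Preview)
Your argument is correct and is precisely the standard spectral proof of the expander mixing lemma: expand $\mathbf 1_S$ in an orthonormal eigenbasis of the adjacency matrix, isolate the contribution of the top eigenvector $m^{-1/2}\mathbf 1$, and bound the remainder using $|\lambda_i|\le\lambda_2(G)$ together with Parseval. Note, however, that the paper does not give its own proof of this theorem at all; it is quoted verbatim as a result of Alon and Chung \cite{AC} and used as a black box, so there is no ``paper's proof'' to compare against. One tiny quibble: in your closing remark, the inequality $|\langle x,Ax\rangle|\le\lambda_2(G)\|x\|_2^2$ for $x\perp\mathbf 1$ is really the Rayleigh-quotient bound (equivalently, the spectral decomposition again) rather than Cauchy--Schwarz, but this does not affect the main line of the argument.
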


 We apply Theorem~\ref{alon_chung} to the graph $PG_{n,k}$. Note that, in our case, $\Prb[A_1\cap A_2]$ is exactly the proportion of all edges of $PG_{n,k}$ that are contained in $\mathcal G$. We also use that $\Prb[\bar A_1\cap A_2]= \Prb[A_2]-\Prb[A_1\cap A_2] = \alpha-\Prb[A_1\cap A_2]$. Putting this together, we get the following.

\begin{prop} \label{prob}
    $|\Prb[A_1\cap A_2] - \alpha^2|, |\Prb[\bar A_1\cap A_2] - \alpha(1-\alpha)|\le \frac{\lambda_2 \alpha(1-\alpha)}{D} =  \frac{\alpha(1-\alpha)}{n-1}$.
\end{prop}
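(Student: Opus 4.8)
The plan is to derive Proposition~\ref{prob} directly from Theorem~\ref{alon_chung} applied to $G = PG_{n,k}$, identifying the parameters carefully. First I would set $m = n^k$ (the number of vertices), $D = (n-1)^k$ (the common degree, from the Proposition on eigenvalues), and $S = \GG$, so that $|S| = \alpha n^k = \alpha m$ by the definition of $\alpha$. The key observation is that the quantity $\frac{2e(S)}{Dm}$ appearing in Alon--Chung is precisely $\Prb[A_1 \cap A_2]$: indeed, $Dm/2$ is the total number of edges of $PG_{n,k}$, and $e(S)$ is the number of those edges with both endpoints in $\GG$, so the ratio is exactly the probability that a uniformly random edge (equivalently, a uniformly random unordered pair of disjoint $k$-tuples) lies entirely in $\GG$. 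Here I should be slightly careful to match the ``two disjoint sets chosen uniformly at random'' in the setup with ``a uniformly random edge of $PG_{n,k}$'' — these coincide because edges of $PG_{n,k}$ are exactly the unordered pairs of disjoint tuples, and the events $A_1, A_2$ are symmetric in $S_1, S_2$.

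Next I would plug into the Alon--Chung bound: $\left|\Prb[A_1\cap A_2] - \alpha^2\right| \le \frac{\lambda_2(PG_{n,k})\,\alpha(1-\alpha)}{D}$. By the Proposition stated above, $\lambda_2(PG_{n,k}) = (n-1)^{k-1}$ and $D = (n-1)^k$, so $\frac{\lambda_2}{D} = \frac{1}{n-1}$, giving the first claimed inequality $\left|\Prb[A_1\cap A_2] - \alpha^2\right| \le \frac{\alpha(1-\alpha)}{n-1}$.

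For the second inequality, I would use the identity already noted in the text: $\Prb[\bar A_1 \cap A_2] = \Prb[A_2] - \Prb[A_1 \cap A_2] = \alpha - \Prb[A_1\cap A_2]$, where $\Prb[A_2] = \alpha$ since $S_2$ is (marginally) a uniformly random element of $[n]^k$. Hence $\Prb[\bar A_1 \cap A_2] - \alpha(1-\alpha) = \big(\alpha - \Prb[A_1\cap A_2]\big) - \alpha + \alpha^2 = \alpha^2 - \Prb[A_1\cap A_2]$, which has the same absolute value as $\Prb[A_1\cap A_2] - \alpha^2$ and is therefore bounded by the same quantity $\frac{\alpha(1-\alpha)}{n-1}$.

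I do not anticipate a serious obstacle here; this is essentially a bookkeeping argument. The only point requiring a little care is the marginal-uniformity claim $\Prb[A_2] = \alpha$: one must check that when an unordered pair of disjoint tuples is chosen uniformly at random and then we look at a fixed one of them (or either of them, by symmetry), its marginal distribution is uniform over $[n]^k$. This follows from the vertex-transitivity of $PG_{n,k}$ (it is a Cayley-type graph, being a Kronecker power of $K_n$), which guarantees that the two endpoints of a uniformly random edge are each uniformly distributed over the vertex set. With that in hand, both inequalities follow immediately.
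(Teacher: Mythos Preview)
Your proposal is correct and follows essentially the same argument as the paper: apply Theorem~\ref{alon_chung} to $PG_{n,k}$, identify $\frac{2e(S)}{Dm}$ with $\Prb[A_1\cap A_2]$, and deduce the second bound from the identity $\Prb[\bar A_1\cap A_2]=\alpha-\Prb[A_1\cap A_2]$. You are in fact more explicit than the paper about the marginal-uniformity point $\Prb[A_2]=\alpha$, which the paper simply asserts.
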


Next, we state a result on martingales that will be used. In their survey, Chung and Lu proved the following concentration inequality (cf. \cite[Theorems 7.1 and 7.2]{CL}):

\begin{thm}
\label{m_conc_1}
Suppose that a nonnegative martingale $X$, associated with a filter $\mathbf{F}$, satisfies
$$
\Var (X_i \mid \ff_{i-1}) \le  \phi_i X_{i-1}
$$
and
$$
|X_i - X_{i-1}| \le M
$$
for $1 \le i \le n$. Here, $\phi_i$ and $M$ are nonnegative constants. Then for $\delta\in\{-1,1\}$ we have
    $$
    \Prb\big[\delta\cdot (X_n - \E X) \ge \lambda\big] \le
    \exp\left( -\frac{\lambda^2}{ 2((\E X + \lambda) (\sum_{i=1}^n \phi_i) + M\lambda/3) } \right).
    $$



\end{thm}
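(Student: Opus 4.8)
The plan is to run the exponential-moment (Chernoff--Bernstein) method, handling the two tails by two different devices. Set $Y_i:=X_i-X_{i-1}$, so that the martingale property gives $\E[Y_i\mid\ff_{i-1}]=0$ and $\E[Y_i^2\mid\ff_{i-1}]=\Var(X_i\mid\ff_{i-1})\le\phi_iX_{i-1}$, while $|Y_i|\le M$ by assumption; we take $X_0=\E X$ deterministic. Markov's inequality gives, for $t\ge0$, $\Prb\big[\delta(X_n-\E X)\ge\lambda\big]\le e^{-t\lambda}\,\E\big[e^{\delta t(X_n-\E X)}\big]$, so it suffices to bound the exponential moments $\E\big[e^{\pm t(X_n-\E X)}\big]$. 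The elementary ingredient is a one-step bound: if $Y$ is $\ff_{i-1}$-conditionally centered with $|Y|\le M$ and conditional second moment at most $\sigma^2$, then, using that $x\mapsto(e^x-1-x)/x^2$ is nondecreasing on $\R$ — whence $e^{sY}\le1+sY+Y^2\psi(s)$ pointwise on $\{|Y|\le M\}$, with $\psi(s):=(e^{|s|M}-1-|s|M)/M^2$ — one obtains
\begin{equation*}
\E\big[e^{sY}\mid\ff_{i-1}\big]\ \le\ \exp\!\big(\sigma^2\psi(s)\big)\qquad\text{for all }s\in\R.
\end{equation*}
The obstacle is that the only conditional-variance bound at hand, $\sigma_i^2=\phi_iX_{i-1}$, is \emph{random} and grows with $X_{i-1}$ — precisely the quantity driving the upper tail — so the one-step bounds cannot simply be multiplied; the two tails require separate ways to break this feedback.

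For the upper tail ($\delta=1$) I would localize via a stopped martingale. Put $\tau:=\min\{i:X_i\ge\E X+\lambda\}$, with $\tau:=n$ if there is no such $i$, and $\tilde X_i:=X_{i\wedge\tau}$. Then $\tilde X$ is a martingale with increments bounded by $M$; one has $\{X_n\ge\E X+\lambda\}\subseteq\{\tilde X_n\ge\E X+\lambda\}$; and because $\{\tau\ge i\}\in\ff_{i-1}$ and $X_{i-1}<\E X+\lambda$ whenever $i\le\tau$, the conditional variances are now dominated \emph{deterministically}: $\Var(\tilde X_i\mid\ff_{i-1})\le\phi_i(\E X+\lambda)$ (the left side being $0$ on $\{i>\tau\}$). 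Now the one-step bounds multiply freely: conditioning successively on $\ff_{n-1},\dots,\ff_0$ and using $\tilde X_0=\E X$ gives
\begin{equation*}
\E\big[e^{t(\tilde X_n-\E X)}\big]\ \le\ \exp\!\big((\E X+\lambda)\,V\,\psi(t)\big),\qquad V:=\textstyle\sum_{i=1}^n\phi_i.
\end{equation*}
Inserting the elementary inequality $e^x-1-x\le x^2\big/\big(2(1-x/3)\big)$, valid for $0\le x<3$, and taking $t:=\lambda\big((\E X+\lambda)V+M\lambda/3\big)^{-1}$ (for which $tM<3$), a short computation collapses the exponent $-t\lambda+(\E X+\lambda)V\psi(t)$ to exactly $-\lambda^2\big/\big(2((\E X+\lambda)V+M\lambda/3)\big)$, which is the claimed estimate.

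For the lower tail ($\delta=-1$) we may assume $\lambda\le\E X$, for otherwise $\Prb[X_n\le\E X-\lambda]=0$ since $X\ge0$. Here, rather than truncate, I would peel increments off the end, exploiting $X_{i-1}=\E X+\sum_{j<i}Y_j\ge0$: conditioning on $\ff_{n-1}$ and applying the one-step bound with $\sigma^2=\phi_nX_{n-1}$,
\begin{align*}
\E\big[e^{-t(X_n-\E X)}\mid\ff_{n-1}\big]
&\le\exp\!\Big(-t\!\!\sum_{j\le n-1}\!\!Y_j+\phi_n\psi(t)\,X_{n-1}\Big)\\
&=\exp\!\Big(\big(\phi_n\psi(t)-t\big)\!\!\sum_{j\le n-1}\!\!Y_j+\phi_n\psi(t)\,\E X\Big),
\end{align*}
so iterating produces the parameter recursion $t_n=t$, $t_{i-1}=t_i-\phi_i\psi(t_i)$. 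For the $t$ chosen above (and since $\lambda\le\E X$) this recursion stays inside $[0,t]$, so $\psi(t_i)\le\psi(t)$, and telescoping yields $\E\big[e^{-t(X_n-\E X)}\big]\le\exp\big(\E X\cdot V\psi(t)\big)$ — even slightly stronger than the upper-tail bound — after which the same Chernoff optimization gives a bound no larger than the stated one. The crux of the proof, and its only genuinely delicate point, is exactly this control of the random variance proxy $\phi_iX_{i-1}$: stopping-time truncation for the upper tail, and the self-limiting recursion for the lower tail; everything else is the routine Chernoff optimization together with the two elementary exponential estimates.
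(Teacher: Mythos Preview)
The paper does not give its own proof of this statement: it is quoted verbatim from Chung and Lu's survey (their Theorems~7.1 and~7.2), so there is no in-paper argument to compare your attempt against.

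That said, your argument is correct and is essentially the standard one. The lower-tail device --- peeling off increments and absorbing the random factor $\phi_iX_{i-1}$ into a parameter recursion $t_{i-1}=t_i-\phi_i\psi(t_i)$ --- is exactly the mechanism Chung and Lu use. For the upper tail your stopping-time truncation $\tau=\min\{i:X_i\ge\E X+\lambda\}$ is a clean way to cap the random variance proxy $\phi_iX_{i-1}$ by the deterministic $\phi_i(\E X+\lambda)$; after that the one-step bounds multiply and the Bernstein choice $t=\lambda\big/\big((\E X+\lambda)V+M\lambda/3\big)$ collapses the exponent to the stated form, as you compute. One minor remark: the claim that the lower-tail recursion stays in $[0,t]$ does not actually require $\lambda\le\E X$. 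From $t_i\le t$, $\phi_i\le V$, and $tM<3$ one gets $\phi_i\psi(t_i)\le\phi_i t_i^2/\big(2(1-t_iM/3)\big)\le t_i$, since $t_i(\phi_i+2M/3)\le t(\phi_i+2M/3)\le 2$ follows from $\lambda\phi_i\le\lambda V\le 2(\E X+\lambda)V$. The hypothesis $\lambda\le\E X$ is only needed to make the lower-tail event nonempty.
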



\subsection{The concentration}

Fix integers $n, k$. Let $\mathcal G \subset [n]^k$ be a family and set $\alpha = |\mathcal G| / n^k$. Let $\eta$ be the random variable $|\mathcal G \cap \mathcal M|$, where $\mathcal M = (M_1, \ldots, M_n)$ is chosen uniformly at random out of all ordered perfect matchings. Clearly, we have $\E \eta = \alpha n$.

The goal of this section is to prove the following theorem.
\begin{thm}\label{thmconcen} For any $\lambda > 0$ and $\delta\in\{-1, 1\}$ the following holds.
\begin{equation}\label{concen1}
\Prb[\delta\cdot(\eta - \alpha n) \ge 2\lambda] \le
2 \exp\left(- \frac{\lambda^2}{\alpha n / 2 + 2\lambda} \right).
\end{equation}
Moreover, for any $\mu>0$,  $\lambda\ge 2\sqrt{\alpha n} + 1\ge 9$ and $\delta\in \{-1,1\}$, we have
\begin{equation}\label{concen2}
\Prb\big[\delta\cdot(\eta-\alpha n)>2\lambda+2\mu]\le
\Prb\big[\delta\cdot(\eta-\alpha n) \ge 1 \big]\cdot
    4\exp\Big(-\frac{\mu^2}{(\alpha n+2\lambda)/2+2\mu}\Big).
\end{equation}

\end{thm}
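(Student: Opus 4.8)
The plan is to build a martingale by exposing the random matching edge by edge and to apply Theorem~\ref{m_conc_1} twice: once directly to obtain \eqref{concen1}, and once conditionally to obtain \eqref{concen2}. First I would set up the filter: let the random ordered perfect matching $\mathcal M = (M_1,\ldots,M_n)$ be generated by revealing $M_1$, then $M_2$, and so on, and let $\ff_i$ be the $\sigma$-algebra generated by $(M_1,\ldots,M_i)$. Set $X_i := \E[\eta \mid \ff_i]$, so that $X_0 = \E\eta = \alpha n$ and $X_n = \eta$; this is a nonnegative Doob martingale. The bounded-difference bound $|X_i - X_{i-1}|\le M$ should hold with $M = O(1)$: changing which edge sits in position $i$ can only change the count $|\GG\cap\mathcal M|$ by a bounded amount after re-randomizing the tail, since the conditional expectation of the number of $\GG$-edges among the remaining positions is a smooth function of the set of still-available points; I would expect $M=2$ to work. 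For the variance bound $\Var(X_i\mid\ff_{i-1})\le \phi_i X_{i-1}$, the key input is Proposition~\ref{prob}: conditioned on $\ff_{i-1}$, the edge $M_i$ is (close to) a uniformly random edge of $PG_{n-(i-1)k, k}$ on the remaining ground set, and Proposition~\ref{prob} controls the pair-correlation $\Prb[A_1\cap A_2]-\alpha^2$ by $\frac{\alpha(1-\alpha)}{n-1}$, which is exactly what is needed to bound the increment's contribution to the variance by something proportional to $X_{i-1}$ with $\sum_i \phi_i = O(1)$ (morally $\sum \phi_i \le 1$, giving the $\alpha n/2$ in the exponent after accounting for constants).

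With these two bounds in hand, \eqref{concen1} follows by plugging $\E X = \alpha n$, the value of $\sum\phi_i$, and $M$ into Theorem~\ref{m_conc_1} with the parameter $\lambda$ there set to our $2\lambda$, absorbing constants; the factor $2$ in front comes from the union bound over the two-sided statement being reduced to the one-sided bound for each $\delta$, or simply from crude constant management in the exponent's denominator. The slightly awkward shape $\alpha n/2 + 2\lambda$ in the denominator should come out of $(\E X + 2\lambda)\sum\phi_i + M\cdot 2\lambda/3$ after bounding $\sum\phi_i$ and $M$ by explicit small constants.

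For \eqref{concen2}, the idea is a two-stage exposure / restart argument. Condition on the event $B := \{\delta\cdot(\eta-\alpha n)\ge 1\}$ (or rather, localize using the first time the martingale has drifted by the amount $\lambda$): run the martingale until $\eta$ has moved by $\ge 2\lambda$ in direction $\delta$; on the complementary event the left side of \eqref{concen2} is $0$, and on $B$ the conditional distribution of the remaining increment is again controlled by a martingale with the same variance/difference bounds but now started from a value close to $\alpha n + O(\lambda)$ (which is why $\E X + \lambda$ gets replaced by $(\alpha n + 2\lambda)/2$ in the denominator). Applying Theorem~\ref{m_conc_1} to this restarted martingale with deviation parameter $2\mu$ gives the factor $4\exp(-\mu^2/((\alpha n+2\lambda)/2 + 2\mu))$, and the remaining probability $\Prb[\delta\cdot(\eta-\alpha n)\ge 1]$ is pulled out as a prefactor. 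The hypotheses $\lambda\ge 2\sqrt{\alpha n}+1\ge 9$ are there to guarantee that $2\lambda$ is large enough relative to $\sqrt{\mathrm{Var}\,\eta}\approx\sqrt{\alpha n}$ that the restart point is genuinely $\ge$ the mean plus a constant, so that "$\eta$ already deviated by $2\lambda$" forces "$\eta$ deviates by at least $1$" trivially and the conditioning is consistent.

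The main obstacle I anticipate is the variance bound $\Var(X_i\mid\ff_{i-1})\le\phi_i X_{i-1}$ with a summable $\phi_i$: one must carefully express the one-step increment $X_i - X_{i-1}$ in terms of the edge $M_i$ versus its "resampled" version, relate the conditional second moment of this increment to $\Prb[A_1\cap A_2]$ and $\Prb[\bar A_1\cap A_2]$ on the shrinking ground set, and then invoke Proposition~\ref{prob} uniformly over $i$ (the ground set has size $n-(i-1)k$, so the error term $\frac{\alpha(1-\alpha)}{n-1}$ is replaced by something like $\frac{\alpha_i(1-\alpha_i)}{n-(i-1)k-1}$, where $\alpha_i$ is the current conditional density — one must check this still sums to a constant, which it does since it behaves like $\sum_i \frac{1}{n-ik}\cdot \alpha n \cdot \frac1n$ type terms). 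The second delicate point is making the "restart" in \eqref{concen2} rigorous via a stopping-time/optional-stopping argument rather than hand-waving, but this is standard once the per-step bounds are established.
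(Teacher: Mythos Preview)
Your overall architecture---Doob martingale plus Theorem~\ref{m_conc_1}, with a stopping-time restart for \eqref{concen2}---is the right one, but the variance step as you have sketched it does not close, and the paper's proof contains a device you are missing.

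Two differences from the paper matter. First, the paper's filtration is generated by the \emph{indicators} $\eta_1,\ldots,\eta_i$ (whether $M_j\in\GG$), not by the edges $M_1,\ldots,M_i$; this coarser filtration lets them write $X_i=\sum_{j\le i}\eta_j+(n/2-i)Y_i$ with $Y_i=\E[\eta_n\mid\eta_1,\ldots,\eta_i]$ a single scalar, and then control $|Y_{i+1}-Y_i|$ and $\Var[Y_{i+1}\mid\cdots]$ via Proposition~\ref{prob}. Second, and this is the essential point, the paper does \emph{not} run the martingale on $\eta$. It splits $\eta=\tilde\eta+\tilde\eta'$ with $\tilde\eta=\eta_1+\cdots+\eta_{n/2}$, proves concentration for $\tilde\eta$, and then spends a factor $2$ via $\Prb[\delta(\eta-\E\eta)\ge 2a]\le 2\Prb[\delta(\tilde\eta-\E\tilde\eta)\ge a]$.

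The halving is not cosmetic. With $i\le n/2$ the paper gets
\[
\Var[X_{i+1}\mid\eta_1,\ldots,\eta_i]\ \le\ \frac{(n/2-i)^2}{(n-i-1)^2}\,Y_i\ \le\ \frac{n/2-i}{(n-i-1)^2}\,X_i\ \le\ \frac{X_i}{n},
\]
using $n/2-i\le (n-i-1)$ in the last step; hence $\phi_i=1/n$ and $\sum_{i\le n/2}\phi_i\le 1/2$. If you run the same computation for the full $\eta$, the prefactor becomes $(n-i)^2/(n-i-1)^2$, the last inequality fails, and you are left with $\phi_i\approx 1/(n-i)$, so $\sum_i\phi_i\approx\log n$, not $O(1)$. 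Your claim ``it behaves like $\sum_i\frac{1}{n-ik}\cdot\alpha n\cdot\frac1n$'' is wrong on both counts: removing a matching edge in $[n]^k$ drops the size parameter from $n$ to $n-1$ (one point per part), not by $k$; and even with the correct $n-i$ in the denominator the sum is harmonic and diverges. The paper flags exactly this: getting $\Var\le X_i/n$ ``is essentially the only reason why we had to deal with $\tilde\eta$ instead of $\eta$ itself.''

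For \eqref{concen2} your restart outline is correct in spirit, but the prefactor $\Prb[\delta(\eta-\alpha n)\ge 1]$ is not obtained by conditioning on that event. The paper stops at the first $i$ with $Z_i-\alpha n\in[2\lambda-2,2\lambda]$ (using $|Z_{i+1}-Z_i|\le 2$), then applies \eqref{concen1} to the residual matching in \emph{both} directions: the downward application shows that from any such stopped configuration $\Prb[\eta-\alpha n\ge 1]\ge 1/2$ (this is precisely where $\lambda\ge 2\sqrt{\alpha n}+1\ge 9$ is used, to force the exponential below $1/2$), while the upward application gives the factor $\exp(-\mu^2/((\alpha n+2\lambda)/2+2\mu))$. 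Dividing the two and summing over stopped configurations produces the factor $4$ and the stated inequality.
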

While the first part of the statement is a large deviations statement for $\eta$, the second part of the statement is a 'conditional large deviations' result, which tells us that `very large deviations should be much less probable than moderately large deviations'. This part of the statement is technical, but required in the proof of the main theorem.

Put $\eta := \eta_1 + \ldots + \eta_n$ and $\tilde\eta := \eta_1 + \ldots + \eta_{n/2}$, where $\eta_i$ is the indicator function of the event $A_i$ that the set $M_i$ belongs to $\mathcal G$.
(In what follows, we assume that $n$ is divisible by $2$. This simplifying assumption only slightly affects the calculations to follow and does not affect the result.)

Although ultimately we are interested in the behaviour of $\eta$, for technical reasons we need to deal with $\tilde \eta.$ To this end, we may relate them as follows.

\begin{prop}\label{prophalf} Let $a>0$ be a real number, $\delta\in \{-1,1\}$. Then $\Prb[\delta\cdot(\eta-\E\eta)\ge 2a]\le 2\Prb[\delta\cdot(\tilde \eta-\E\tilde\eta)\ge a]$.
\end{prop}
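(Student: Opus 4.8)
The goal is to compare the large-deviation behaviour of $\eta$ (sum over all $n$ coordinates of the matching) with that of $\tilde\eta$ (sum over the first $n/2$ coordinates). The natural idea is symmetry: by relabelling, the ``second half'' sum $\eta - \tilde\eta = \eta_{n/2+1}+\ldots+\eta_n$ has exactly the same distribution as $\tilde\eta$, since a uniformly random ordered perfect matching is invariant under any permutation of the $n$ blocks $M_1,\ldots,M_n$. So $\eta$ is the sum of two identically distributed (but not independent) copies of $\tilde\eta$, each with mean $\E\tilde\eta = \tfrac12\E\eta$.

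The plan is as follows. Fix $\delta=1$ (the case $\delta=-1$ is symmetric, replacing $\GG$ by its complement or just repeating the argument). Suppose $\eta - \E\eta \ge 2a$. Writing $\eta = \tilde\eta + (\eta-\tilde\eta)$ and $\E\eta = \E\tilde\eta + \E(\eta-\tilde\eta)$, we get $(\tilde\eta - \E\tilde\eta) + \big((\eta-\tilde\eta) - \E(\eta-\tilde\eta)\big) \ge 2a$, hence at least one of the two summands is $\ge a$. Therefore
\begin{equation*}
\Prb[\eta - \E\eta \ge 2a] \le \Prb[\tilde\eta - \E\tilde\eta \ge a] + \Prb[(\eta-\tilde\eta) - \E(\eta-\tilde\eta) \ge a].
\end{equation*}
Then I would invoke the distributional identity $\eta - \tilde\eta \stackrel{d}{=} \tilde\eta$ to conclude that the second probability equals the first, giving the factor $2$. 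For the case $\delta=-1$, the same splitting with reversed inequalities works verbatim.

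The one point that needs a word of care is the claim that $\eta - \tilde\eta$ and $\tilde\eta$ have the same law. This is where the ``$n$ divisible by $2$'' assumption is used: a uniformly random ordered perfect matching $\mathcal M = (M_1,\ldots,M_n)$ is exchangeable in its $n$ coordinates, so the block of the first $n/2$ indices and the block of the last $n/2$ indices are interchangeable, and each $\eta_i$ is a fixed (coordinatewise) function of $M_i$. Hence $(\eta_1,\ldots,\eta_{n/2})$ and $(\eta_{n/2+1},\ldots,\eta_n)$ have the same joint distribution, so their sums agree in distribution. I expect this symmetry observation to be essentially the whole content of the proof; there is no real obstacle, just the bookkeeping of the union bound and making the exchangeability statement precise. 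The only mild subtlety is making sure the event $\{\delta\cdot(\eta-\tilde\eta-\E(\eta-\tilde\eta))\ge a\}$ is translated correctly into an event about $\tilde\eta$ — but since $\E(\eta-\tilde\eta)=\E\tilde\eta$ by symmetry, this is immediate.
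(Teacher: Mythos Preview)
Your proposal is correct and is exactly the paper's argument: the paper's one-line proof simply observes that $\eta=\tilde\eta+\tilde\eta'$ with $\tilde\eta'$ a (dependent) copy of $\tilde\eta$, which is precisely your exchangeability remark, and then the union bound you wrote out is the implicit step. Your write-up is in fact more detailed than the paper's.
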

\begin{proof}
The proof of the inequality for both choises of $\delta$ follows from the easy fact that $\eta = \tilde\eta+\tilde\eta',$ where  $\tilde\eta'$ is a copy of $\tilde\eta$. (Note that $\tilde \eta$ and $\tilde \eta'$ are, in general, dependent.)
\end{proof}

\bigskip

Let $X_0, \ldots, X_{n/2}$ and $Y_0, \ldots, Y_{n/2}$ be the following exposure martingales:
$$
X_i = \E [\tilde\eta \mid \eta_1, \ldots, \eta_i] \ \ \ \text{and}\ \ \ Y_i = \E [\eta_{n} \mid \eta_1, \ldots, \eta_i].$$
Note that $X_i = \sum_{j=1}^i \eta_j + (n/2 - i) Y_i$.

\begin{prop}\label{propforconc}
The following holds for any $i\le n-2$:

    \begin{enumerate}
        \item $\Var [\E[\eta_{n} \mid \eta_{i+1} ,M_1,\ldots,M_{i}] \mid M_1, \ldots, M_{i}] \le \frac{\E[\eta_{n}\mid M_1,\ldots,M_{i}]}{(n-i-1)^2}$.
        \item $\big|\E[\eta_{n}\mid \eta_{i+1} ,M_1,\ldots,M_{i}] - \E[\eta_{n}\mid M_1,\ldots,M_{i}]\big| \le \frac{1}{n - i - 1}$.
    \end{enumerate}
\end{prop}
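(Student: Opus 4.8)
The plan is to condition on everything in $M_1,\ldots,M_i$ and analyze the one extra bit of randomness coming from whether $M_{i+1}\in\GG$. After revealing $M_1,\ldots,M_i$, the remaining $n-i$ edges of the matching form a uniformly random perfect matching on the ground set $[n]^k$ restricted to the coordinates not yet used — call this restricted ground set $U$, which is itself isomorphic to the complete $k$-partite $k$-uniform "product" structure $[n-i]^k$. Write $\GG'\subset U$ for the induced family and $\alpha' := |\GG'|/(n-i)^k$. Then $\E[\eta_n\mid M_1,\ldots,M_i] = \alpha'$ (by symmetry among the $n-i$ unrevealed slots), and conditioning further on $\eta_{i+1}$, i.e.\ on the event $A_{i+1}$ (resp.\ its complement) that $M_{i+1}\in\GG'$, we get $\E[\eta_n\mid \eta_{i+1}=1, M_1,\ldots,M_i] = \Prb[A_n\mid A_{i+1}]$ and $\E[\eta_n\mid\eta_{i+1}=0,\ldots] = \Prb[A_n\mid \bar A_{i+1}]$, where $A_{i+1},A_n$ refer to two random disjoint slots in the restricted structure on $n-i$ points.

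Next I would apply Proposition~\ref{prob} to the restricted structure (with $n$ replaced by $n-i$): it gives $|\Prb[A_{i+1}\cap A_n]-(\alpha')^2|\le \frac{\alpha'(1-\alpha')}{n-i-1}$ and $|\Prb[\bar A_{i+1}\cap A_n]-\alpha'(1-\alpha')|\le\frac{\alpha'(1-\alpha')}{n-i-1}$. Dividing the first bound by $\Prb[A_{i+1}]=\alpha'$ and the second by $\Prb[\bar A_{i+1}]=1-\alpha'$ yields
\[
\big|\Prb[A_n\mid A_{i+1}]-\alpha'\big|\le \frac{1-\alpha'}{n-i-1}\le\frac1{n-i-1},\qquad \big|\Prb[A_n\mid\bar A_{i+1}]-\alpha'\big|\le\frac{\alpha'}{n-i-1}\le\frac1{n-i-1}.
\]
Since $\E[\eta_n\mid\eta_{i+1},M_1,\ldots,M_i]$ equals one of these two conditional probabilities and $\E[\eta_n\mid M_1,\ldots,M_i]=\alpha'$, part~(2) follows immediately. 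For part~(1), note that $\E[\eta_n\mid\eta_{i+1},M_1,\ldots,M_i]$ is a two-valued random variable (given $M_1,\ldots,M_i$) taking value $\Prb[A_n\mid A_{i+1}]$ with probability $\alpha'$ and $\Prb[A_n\mid\bar A_{i+1}]$ with probability $1-\alpha'$, with mean $\alpha'$; its variance is $\alpha'(1-\alpha')\big(\Prb[A_n\mid A_{i+1}]-\Prb[A_n\mid\bar A_{i+1}]\big)^2$. Using the two displayed bounds, the bracketed difference is at most $\frac1{n-i-1}\cdot\frac{1}{\min(\alpha',1-\alpha')}$ in a crude form; more carefully, writing $\Prb[A_n\mid A_{i+1}]-\Prb[A_n\mid\bar A_{i+1}] = \frac{\Prb[A_{i+1}\cap A_n]}{\alpha'}-\frac{\Prb[\bar A_{i+1}\cap A_n]}{1-\alpha'}$ and clearing denominators gives $\frac{\Prb[A_{i+1}\cap A_n]-\alpha'\Prb[A_n]}{\alpha'(1-\alpha')}$, whose numerator is $\le\frac{\alpha'(1-\alpha')}{n-i-1}$ by Proposition~\ref{prob} again (in the form $|\Prb[A_{i+1}\cap A_n]-(\alpha')^2|$, since $\Prb[A_n]=\alpha'$). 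Hence the variance is at most $\alpha'(1-\alpha')\cdot\frac{1}{(n-i-1)^2}\le\frac{\alpha'}{(n-i-1)^2}=\frac{\E[\eta_n\mid M_1,\ldots,M_i]}{(n-i-1)^2}$, which is exactly part~(1).

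The main obstacle, and the step to be careful about, is the bookkeeping of conditional probabilities: making precise that conditioning on $M_1,\ldots,M_i$ genuinely reduces the problem to a uniformly random perfect matching on a smaller product structure of size $n-i$, so that Proposition~\ref{prob} applies verbatim with $n-i$ in place of $n$ (hence the $n-i-1$ in the denominators). One should also double-check the degenerate cases $\alpha'\in\{0,1\}$ (where the conditional quantities are constant and all bounds hold trivially), and confirm that the identity $X_i=\sum_{j\le i}\eta_j+(n/2-i)Y_i$ together with these estimates is what will later feed into Theorem~\ref{m_conc_1}; but the computation itself is just the two-point-variance formula combined with the Alon--Chung estimate, so beyond the reduction there is no real difficulty.
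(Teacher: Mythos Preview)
Your proposal is correct and follows essentially the same route as the paper: fix $M_1,\ldots,M_i$, reduce to the product structure $[n-i]^k$ with induced density $\alpha'$, apply Proposition~\ref{prob} (the Alon--Chung estimate) to bound $|\Prb[A_n\mid A_{i+1}]-\alpha'|\le\frac{1-\alpha'}{n-i-1}$ and $|\Prb[A_n\mid\bar A_{i+1}]-\alpha'|\le\frac{\alpha'}{n-i-1}$, and read off both parts. The only cosmetic difference is that the paper computes the variance directly as $\alpha'\cdot\big(\tfrac{1-\alpha'}{n-i-1}\big)^2+(1-\alpha')\cdot\big(\tfrac{\alpha'}{n-i-1}\big)^2=\tfrac{\alpha'(1-\alpha')}{(n-i-1)^2}$, whereas you use the two-point formula $\alpha'(1-\alpha')(p_1-p_0)^2$ and then bound $|p_1-p_0|$; both give the identical bound, and in fact your two displayed estimates already yield $|p_1-p_0|\le\frac{1}{n-i-1}$ without the extra ``more carefully'' detour.
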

Note that in both 1. and 2. we are comparing some functions pointwise, i.e., the inequalities should hold for any choices of $M_1,\ldots, M_i$ and $\eta_{i+1}$.

\begin{proof}[Proof of Proposition~\ref{propforconc}]
Fix $M_1, \ldots, M_{i}$ and consider the graph $PG_{n', k}$ on $V' := [n]^k \setminus \bigcup_{j=1}^{i} M_j \cong [n']^k$, where $n' := n - i$. Put $\mathcal G' := \mathcal G \cap V'$ and $\alpha' := |\mathcal G'| / (n')^k$. Consider the intersection of $\mathcal G'$ with a randomly chosen $n'$-matching $\mathcal M' = (M_{i+1},\ldots, M_{n})$ of sets from $[n']^k$. Let $\eta_j$ and $A_j$ be defined as before.

Note that $\E\eta_{n} = \E\eta_{i+1} = \Prb[A_{i+1}]= \alpha'.$
In these terms, we need to show \begin{equation}\label{eqnew2}\Var [\E[\eta_{n} \mid \eta_{i+1}]] \le \frac{\alpha'}{(n'-1)^2}\ \ \ \ \text{and}\ \ \ \  \big|\E [\eta_{n} \mid \eta_{i+1}]-\alpha'\big|\le \frac{1}{n'-1}\ \ \ \ \text{for } i\le n-2.\end{equation}

\bigskip

Note that $\E [\eta_n|\eta_{i+1}]$ and $\Var[\eta_n|\eta_{i+1}]$ are both random variables taking two values depending on the value of $\eta_{i+1}$. Let us first consider the case $\eta_{i+1} = 1$.
$$
\E [\eta_{n} \mid \eta_{i+1} = 1] =
\E [\eta_{n} \mid A_{i+1}] =
\Prb[A_{n} \mid A_{i+1}] =
\frac{\Prb[A_{n} \cap A_{i+1}]}{\Prb[A_{i+1}]}=\frac{\Prb[A_{n} \cap A_{i+1}]}{\alpha'}.
$$
Then, using Proposition~\ref{prob}, we conclude that
\begin{equation}\label{dev1}
\big|\E [\eta_{n} \mid \eta_{i+1} = 1] - \alpha'\big| \le
\frac{1-\alpha'}{n'-1}.
\end{equation}
Next, let us consider the case $\eta_{i+1} = 0$.
$$
\E [\eta_{n} \mid \eta_{i+1} = 0] =
\frac{\Prb[A_{n}\cap\bar A_{i+1}]}{\Prb[\bar A_{i+1}]} =
\frac{\Prb[A_{n}\cap\bar A_{i+1}]}{1 - \alpha'}.
$$
Similarly, using Proposition~\ref{prob}, we conclude that
\begin{equation}\label{dev2}
|\E [\eta_n' \mid \eta_{i+1}' = 0] - \alpha'| \le
\frac{\alpha'}{n'-1}.
\end{equation}
From \eqref{dev1} and \eqref{dev2} we conclude that the second part of \eqref{eqnew2} and thus the proposition holds.
Since $\Prb[\eta_{i+1} = 1] = \alpha'$ and $\Prb[\eta_{i+1} = 0] = 1 - \alpha'$, combining \eqref{dev1} and \eqref{dev2}, we conclude that {\small $$\Var [\eta_{n} \mid \eta_{i+1}] = \alpha'\cdot (\E[\eta_n\mid \eta_{i+1}=1]-\alpha')^2+(1-\alpha')\cdot (\E[\eta_n\mid \eta_{i+1}=0]-\alpha')^2 \le \frac{\alpha'(1 - \alpha')}{(n'-1)^2} \le \frac{\alpha'}{(n'-1)^2}.$$}
\end{proof}

Now let us note that it is easy to deduce the following corollary:
 \begin{cor}\label{corforconc}  The following holds  for any $i\le n-2$: \begin{enumerate}
        \item $\Var [Y_{i+1}\mid \eta_1, \ldots, \eta_{i}] \le \dfrac{Y_i}{(n-i-1)^2}$.
        \item $|Y_{i+1} - Y_i| \le \frac{1}{n - i - 1}$.
    \end{enumerate}
\end{cor}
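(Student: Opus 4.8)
The plan is to transfer the two pointwise estimates of Proposition~\ref{propforconc}, which live on the $\sigma$-algebra $\mathcal{M}_i:=\sigma(M_1,\dots,M_i)$, down to the coarser $\sigma$-algebra $\mathcal{H}_i:=\sigma(\eta_1,\dots,\eta_i)\subseteq\mathcal{M}_i$ on which the martingale $Y$ lives. Write $W:=\E[\eta_{n}\mid\eta_{i+1},M_1,\dots,M_i]$ and $Z:=\E[\eta_{n}\mid M_1,\dots,M_i]$, so that Proposition~\ref{propforconc} reads $\Var[W\mid\mathcal{M}_i]\le Z/(n-i-1)^2$ and $|W-Z|\le 1/(n-i-1)$, both holding pointwise in $M_1,\dots,M_i$. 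Since $\mathcal{H}_i\subseteq\mathcal{M}_i\subseteq\sigma(M_1,\dots,M_i,\eta_{i+1})$ and $\mathcal{H}_{i+1}\subseteq\sigma(M_1,\dots,M_i,\eta_{i+1})$, the tower rule immediately gives the two identities $Y_{i+1}=\E[W\mid\mathcal{H}_{i+1}]$ and $Y_i=\E[W\mid\mathcal{H}_i]=\E[Z\mid\mathcal{H}_i]$.

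For part~1 I would apply the law of total variance twice. First, since $\mathcal{H}_i\subseteq\mathcal{H}_{i+1}$, one has $\Var[Y_{i+1}\mid\mathcal{H}_i]=\Var\big[\E[W\mid\mathcal{H}_{i+1}]\mid\mathcal{H}_i\big]\le\Var[W\mid\mathcal{H}_i]$. Second, since $\mathcal{H}_i\subseteq\mathcal{M}_i$ and $\E[W\mid\mathcal{M}_i]=Z$,
$$\Var[W\mid\mathcal{H}_i]=\E\big[\Var[W\mid\mathcal{M}_i]\mid\mathcal{H}_i\big]+\Var[Z\mid\mathcal{H}_i]\le\frac{\E[Z\mid\mathcal{H}_i]}{(n-i-1)^2}+\Var[Z\mid\mathcal{H}_i]=\frac{Y_i}{(n-i-1)^2}+\Var[Z\mid\mathcal{H}_i].$$
For part~2, writing $Y_{i+1}-Y_i=\E[W-Z\mid\mathcal{H}_{i+1}]+\big(\E[Z\mid\mathcal{H}_{i+1}]-\E[Z\mid\mathcal{H}_i]\big)$ and using $|W-Z|\le 1/(n-i-1)$ pointwise gives $|Y_{i+1}-Y_i|\le\tfrac{1}{n-i-1}+\big|\E[Z\mid\mathcal{H}_{i+1}]-\E[Z\mid\mathcal{H}_i]\big|$.

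Thus in both parts the only thing between this argument and the stated bounds is the ``coarsening loss'' coming from passing from $\mathcal{M}_i$ to $\mathcal{H}_i$, namely $\Var\big[\E[\eta_{n}\mid M_1,\dots,M_i]\mid\eta_1,\dots,\eta_i\big]$ and the corresponding increment of the martingale $j\mapsto\E[Z\mid\mathcal{H}_j]$. The clean scenario is that $Z=\E[\eta_{n}\mid M_1,\dots,M_i]$ is already $\mathcal{H}_i$-measurable, i.e.\ $Z=Y_i$; then this loss is $0$, $Y_i=Z$ is moreover $\mathcal{H}_{i+1}$-measurable, and both parts drop out at once from the two pointwise estimates of Proposition~\ref{propforconc}. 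The step I expect to demand the most care is exactly justifying this: identifying precisely which function of $M_1,\dots,M_i$ the conditional density $|\mathcal{G}\cap V'|/(n')^k$ is, and showing it depends on $M_1,\dots,M_i$ only through the indicators $\eta_1,\dots,\eta_i$ — or, if that fails, bounding the conditional variance of $Z$ given $\mathcal{H}_i$ by $O\big((n-i)^{-2}Y_i\big)$, e.g.\ by telescoping the Alon--Chung estimate of Proposition~\ref{prob} along the matching-exposure filtration $\sigma(M_1,\dots,M_j)$. The remaining ingredients — the tower identities and the two applications of the law of total variance — are routine.
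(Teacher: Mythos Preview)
Your framework coincides with the paper's: both push the pointwise estimates of Proposition~\ref{propforconc} from the fine filtration $\mathcal{M}_i=\sigma(M_1,\dots,M_i)$ down to $\mathcal{H}_i=\sigma(\eta_1,\dots,\eta_i)$ by averaging and convexity of variance. You are right that the entire difficulty sits in the ``coarsening loss'' terms $\Var[Z\mid\mathcal{H}_i]$ and $\E[Z\mid\mathcal{H}_{i+1}]-\E[Z\mid\mathcal{H}_i]$, but your hoped-for clean scenario definitively fails: $Z=\alpha'=|\mathcal G\cap V'|/(n-i)^k$ is \emph{not} a function of $\eta_1,\dots,\eta_i$ alone. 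For instance with $k=2$, $n=3$, $\mathcal G=\{(1,1),(1,2),(2,1)\}$, removing $M_1=(1,1)$ leaves $\alpha'=0$ while removing $M_1=(1,2)$ leaves $\alpha'=1/4$, and both have $\eta_1=1$. So without your fallback bound on $\Var[Z\mid\mathcal{H}_i]$ the argument is incomplete, and that fallback is not a routine consequence of Proposition~\ref{prob}.

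The paper's proof takes the same route but packages the averaging by asserting the identity
$Y_{i+1}=t^{-1}\sum_{(M'_1,\dots,M'_i)}\E[\eta_n\mid\eta_{i+1},M_1=M'_1,\dots,M_i=M'_i]$
with \emph{uniform} weights over tuples consistent with $\eta_1,\dots,\eta_i$; Jensen then gives part~1 directly and part~2 follows termwise. If that identity held, your extra terms would vanish automatically (it is equivalent to $\E[Z\mid\mathcal{H}_{i+1}]=\E[Z\mid\mathcal{H}_i]$). Be aware, though, that the conditional law of $(M_1,\dots,M_i)$ given $(\eta_1,\dots,\eta_{i+1})$ is in general \emph{not} uniform over tuples consistent with $(\eta_1,\dots,\eta_i)$: it is reweighted by $\Prb[\eta_{i+1}\mid M_1,\dots,M_i]$, which is exactly $\alpha'$ or $1-\alpha'$. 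In the same $k=2$, $n=3$ example one computes $Y_2=1/5$ on $\{\eta_1=1,\eta_2=0\}$ while the uniform average equals $2/9$. In short, you have correctly located the genuine subtlety; neither your proposal nor the paper's displayed identity for $Y_{i+1}$ closes it as written, and a complete argument needs an additional estimate controlling how much $\alpha'$ fluctuates over tuples sharing the same indicator pattern.
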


\begin{proof} Informally, this corollary follows from Proposition~\ref{propforconc} via averaging over all choices of $M_1,\ldots, M_i$ that give the corresponding values of $\eta_1,\ldots, \eta_i$ and using the convexity of variance. Let us give a formal proof. Again, we need to show that a certain inequality between functions holds pointwise. In what follows, we fix a choice of $\eta_1,\ldots, \eta_i.$ It is easy to see that 
\begin{align*}
Y_{i+1} =&\ t^{-1}\sum\E[\eta_{n}\mid \eta_{i+1},M_1 = M'_1,\ldots, M_i = M'_{i}],\\
Y_{i} =&\ t^{-1}\sum\E[\eta_{n}\mid M_1 = M'_1,\ldots, M_i = M'_{i}],\end{align*} where both sums are over all tuples $(M'_1,\ldots,M'_{i})$ of disjoint $k$-sets such that $M'_i\in \mathcal G$ if and only if $\eta_j=1$, 
and $t$ is the number of such tuples. This immediately implies the second part of the corollary. As for the first part, we derive it below. We apply Jensen's inequality to the variance to get the first inequality below and use the first part of Proposition~\ref{propforconc} to get the second inequality:
\begin{align*}\Var [Y_{i+1}\mid \eta_1, \ldots, \eta_{i}]  =&\ \Var \big[t^{-1}\sum\E[\eta_{n}\mid \eta_{i+1},M_1 = M'_1,\ldots, M_1 = M'_{i}]\mid \eta_1, \ldots, \eta_{i}\big]\\ \le&\ t^{-1}\sum\Var \big[\E[\eta_{n}\mid \eta_{i+1},M_1 = M'_1,\ldots, M_i = M'_{i}]\mid \eta_1, \ldots, \eta_{i}\big]\\ \le&\  t^{-1}\sum\frac{\E[\eta_{n}\mid M_1 = M_1',\ldots,M_i = M_{i}']}{(n-i-1)^2}\\ =&\ \dfrac{Y_i}{(n-i-1)^2}.\end{align*}
\end{proof}

\begin{cor}\label{corcor}
 For any $i\le n/2-1$, we have
    $$ |X_{i+1}-X_i|\le \frac 32\ \ \ \text{and} \ \ \
    \Var [X_{i+1} \mid \eta_1, \ldots, \eta_i]\le \frac{X_i}{n}.
    $$
\end{cor}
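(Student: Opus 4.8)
The plan is to feed the estimates of Corollary~\ref{corforconc} into the increment of the exposure martingale $X$. Subtracting the two instances of the identity $X_i=\sum_{j=1}^i\eta_j+(n/2-i)Y_i$ gives the decomposition
$$X_{i+1}-X_i=\bigl(\eta_{i+1}-Y_i\bigr)+(n/2-i-1)\bigl(Y_{i+1}-Y_i\bigr).$$
The first summand lies in $[-1,1]$, since $\eta_{i+1}\in\{0,1\}$ and $Y_i\in[0,1]$ (it is a conditional probability). By Corollary~\ref{corforconc}(2), $|Y_{i+1}-Y_i|\le\frac{1}{n-i-1}$, and for $i\le n/2-1$ we have $n/2-i-1\le\frac12(n-i-1)$, so the second summand has absolute value at most $\frac12$; adding the two bounds gives $|X_{i+1}-X_i|\le\frac32$.

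For the variance, note that $X$ is the Doob martingale of $\tilde\eta$, so $\Var[X_{i+1}\mid\eta_1,\ldots,\eta_i]=\E\bigl[(X_{i+1}-X_i)^2\mid\eta_1,\ldots,\eta_i\bigr]$. Conditionally on $\eta_1,\ldots,\eta_i$ the variable $X_{i+1}=\sum_{j=1}^{i}\eta_j+\eta_{i+1}+(n/2-i-1)Y_{i+1}$ depends only on the single bit $\eta_{i+1}$, which equals $1$ with probability $Y_i$ (using $\E[\eta_{i+1}\mid\eta_1,\ldots,\eta_i]=Y_i$, by exchangeability of $\eta_{i+1},\ldots,\eta_n$ given $\eta_1,\ldots,\eta_i$). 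Writing $Y_{i+1}^{(b)}:=\E[\eta_n\mid\eta_1,\ldots,\eta_i,\eta_{i+1}=b]$ and using $Y_i=Y_iY_{i+1}^{(1)}+(1-Y_i)Y_{i+1}^{(0)}$, a two-point variance computation (equivalently, expanding the displayed decomposition and completing the square in $Y_{i+1}^{(1)}-Y_i$, the cross term being $2(n/2-i-1)Y_i\bigl(Y_{i+1}^{(1)}-Y_i\bigr)$) collapses everything to the identity
$$\Var[X_{i+1}\mid\eta_1,\ldots,\eta_i]=\frac{Y_i}{1-Y_i}\Bigl((1-Y_i)+(n/2-i-1)\bigl(Y_{i+1}^{(1)}-Y_i\bigr)\Bigr)^{2}.$$

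It then remains to bound the bracket and divide by $1-Y_i$. I would estimate $Y_{i+1}^{(1)}-Y_i$ by the one-sided versions of \eqref{dev1}--\eqref{dev2} (these survive the averaging of Corollary~\ref{corforconc}): one direction is controlled crudely by $\frac{1-Y_i}{n-i-1}$, while in the other direction — the one enlarging the bracket — one uses that the positive eigenvalues of $PG_{n',k}$ other than the top one are at most $(n'-1)^{k-2}$, much below $\lambda_2=(n'-1)^{k-1}$, so that deviation is only $O\!\bigl(\frac{1-Y_i}{(n-i-1)^2}\bigr)$. With $n/2-i-1\le\frac12(n-i-1)$ this bounds the bracket by a small multiple of $1-Y_i$, hence $\Var[X_{i+1}\mid\eta_1,\ldots,\eta_i]$ by a small multiple of $Y_i(1-Y_i)$, and one then finishes using $X_i\ge(n/2-i)Y_i$ from the identity. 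The increment bound is routine; the real work — and where I expect the main obstacle — is tightening this to the exact constant $1/n$, which forces one to exploit the exact cancellation encoded in the completed-square identity and the asymmetry between \eqref{dev1} and \eqref{dev2} rather than a lossy triangle inequality.
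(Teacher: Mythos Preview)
Your increment bound $|X_{i+1}-X_i|\le\frac32$ is correct and coincides with the paper's argument.

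For the variance, your proposal is incomplete (as you acknowledge), and the route you sketch does not close. Bounding the bracket by a constant multiple of $1-Y_i$ yields only $\Var[X_{i+1}\mid\eta_1,\ldots,\eta_i]\le C\,Y_i(1-Y_i)$, and this is \emph{not} dominated by $X_i/n$ in general: take $i=n/2-1$, so that $m=n/2-i-1=0$ and your bracket is exactly $1-Y_i$ with no eigenvalue input whatsoever, and suppose $Y_i$ is bounded away from $0$ and $1$. Then the left side is $Y_i(1-Y_i)=\Theta(Y_i)$ while $X_i/n=(\sum_{j\le i}\eta_j+Y_i)/n$ can be as small as $Y_i/n$. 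No one-sided eigenvalue refinement helps, since the obstruction already appears at $m=0$ where the second summand in your decomposition vanishes. The ``exact cancellation'' you hope for is therefore not available in the form you describe.

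The paper takes a much shorter route and never performs the two-point computation or any eigenvalue sharpening: it writes
$$\Var[X_{i+1}\mid\eta_1,\ldots,\eta_i]=(n/2-i)^2\,\Var[Y_{i+1}\mid\eta_1,\ldots,\eta_i]\le\frac{(n/2-i)^2}{(n-i-1)^2}\,Y_i\le\frac{n/2-i}{(n-i-1)^2}\,X_i\le\frac{X_i}{n},$$
using Corollary~\ref{corforconc}(1), then $X_i\ge(n/2-i)Y_i$, then the elementary $n(n/2-i)\le(n-i-1)^2$. A word of caution, though: the first equality above amounts to replacing $\eta_{i+1}+(n/2-i-1)Y_{i+1}$ by $(n/2-i)Y_{i+1}$ modulo $\sigma(\eta_1,\ldots,\eta_i)$, and your own two-point identity makes it visible that this is not literally true. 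So while the paper's displayed chain is the intended argument and is what you should compare against, you have in fact put your finger on a genuine subtlety at that step --- one that neither your sketch nor the paper's display fully resolves.
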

\begin{proof}
We have  $0\le Y_i,\eta_{i+1}\le 1$ and thus $-(n/2-i-1)|Y_{i+1}-Y_i|-Y_i\le |X_{i+1}-X_i|\le \eta_{i+1}+(n/2-i-1)|Y_{i+1}-Y_i|$, which, using part 2 of Corollary~\ref{corforconc}, implies $|X_{i+1}-X_i|\le \frac 32$. As for the second inequality, using Corollary~\ref{corforconc}, we can expand it as follows.
\begin{small}
 $$
    \Var [X_{i+1} \mid \eta_1, \ldots, \eta_i]= (n/2 - i)^2 \Var [Y_{i+1} \mid \eta_1, \ldots, \eta_i] \le
    \frac{(n/2 - i)^2}{(n - i - 1)^2} Y_i \le
    \frac{n/2 - i}{(n-i-1)^2} X_i \le \frac{X_i}{n}.
    $$
We note that getting the last inequality is essentially the only reason why we had to deal with $\tilde\eta$ instead of $\eta$ itself.\end{small}
\end{proof}

Now let us apply Theorem~\ref{m_conc_1} to $X_{n/2}=\tilde\eta$. If we choose $\phi_i = \frac{1}{n}$, $M = \frac 32$, then we have
$$
\Prb[\delta\cdot (\tilde\eta - \alpha n/2) \ge \lambda] \le
\exp\left(- \frac{\lambda^2}{\alpha n / 2 + 2\lambda} \right).
$$
Combining this with Proposition~\ref{prophalf}, we get \eqref{concen1}.





\bigskip

Let us deduce \eqref{concen2} from \eqref{concen1}. Assume that $\delta = 1$ and that, for some choice of $(M_1,\ldots, M_{n}),$ we have $\eta-\E \eta>2\mu+2\lambda$. Let $Z_1,\ldots, Z_n$ be the martingale associated with $\eta,$ where $$Z_i:=\E[\eta\mid \eta_1\ldots,\eta_i].$$
Then, for some $i$, we have $2\lambda -2\le Z_i - \alpha n \le 2\lambda$. The latter follows since, as it is not difficult to see, $|Z_{i+1}-Z_i|\le 2$. To see that, one just needs to repeat the proof of Corollary~\ref{corcor} with the obvious changes that we get when passing from $\tilde \eta$ to $\eta$. 

Take any collection of disjoint edges $(M_1,\ldots, M_i)$ such that $i$ is the first such step. Let us restrict to the subgraph as in the proof of Proposition~\ref{propforconc}. Note that, within this restriction, $Z_i$ is a constant from the interval $[2\lambda-2+\alpha n,2\lambda+\alpha n]$. We apply the first part of Theorem~\ref{thmconcen} with $Z_i - \sum_{j=1}^i \eta_i = \E Z_n - \sum_{j=1}^i \eta_i \le \alpha n + 2\lambda$ playing the role of $\alpha n$. We get that
$$
\Prb\big[Z_n - \alpha n \le 0 \big]
\le
\Prb[Z_n - Z_i \le -2\lambda + 2]
\le
2\exp\Big(-\frac{(\lambda-1)^2}{(\alpha n+2\lambda)/2+2(\lambda-1)}\Big).
$$
Recall that $\lambda = 2\sqrt{\alpha n} + 1$ and that $\sqrt {\alpha n}\ge 4$. Then $(\alpha n+ 2\lambda)/2+2\lambda - 2 \le \frac83 \alpha n$ and the expression in the right hand side above is at most $2\exp(-\frac {4\alpha n}{8\alpha n/3}) = 2 e^{-3/2}<\frac 12.$ The same holds for any bigger $\lambda.$ Recall that $Z_n$ has only integer values. Therefore, in these assumptions,
$$
\Prb\big[ Z_n-\alpha n\ge 1 \big] \ge \frac 12.
$$

On the other hand, we can similarly see that

$$
\Prb\big[Z_n-\alpha n>2\lambda+2\mu]\le
\Prb[Z_n - Z_i > 2\mu]\le
2\exp\Big(-\frac{\mu^2}{(\alpha n+2\lambda)/2+2\mu}\Big).
$$

Since both displayed  formulas are valid for any choice of $i$ and $(M_1,\ldots, M_i)$ for which  the event `$\eta-\alpha n>3\lambda+2\mu$' has non-zero probability, we can combine the two displayed formulas, sum them over all possible choices of $(M_1,\ldots, M_i)$ and get that
$$
\Prb\big[\eta-\alpha n > 2\lambda+2\mu] \le
\Prb\big[\eta-\alpha n \ge 1 \big]\cdot 4\exp\Big(-\frac{\mu^2}{(\alpha n +2\lambda)/2 +2\mu}\Big)
$$
for any $\lambda\ge 2\sqrt{\alpha n} + 1\ge 9$. The case $\delta = -1$ is absolutely analogous.

\section{Proof of Theorem~\ref{thm1}}\label{sec3}

We argue indirectly.
Assume that there exist $s$ families $\ff_1,\ldots, \ff_s$ in $[n]^k$, each of size exactly $(s-1) n^{k-1}+1$, and with no rainbow $s$-matching.  Consider the following random variables:
$$
\zeta_i(\mathcal M):=|\ff_i\cap \mathcal M|-s+1,
$$
where $\mathcal M$ is a uniformly random perfect matching of $k$-element sets.

For a fixed perfect matching $\mathcal M = \{M_1,\ldots, M_n\}$, consider the graph $G(\mathcal M)$ with parts $A(\mathcal M)  = \{M_1,\ldots, M_n\}$ and $B:=\{\ff_1,\ldots, \ff_s\}$, where a family is connected to a set from the matching iff the set belongs to the family. The condition that there is no rainbow $s$-matching in $\ff_1,\ldots, \ff_s$ translates into the condition that $G(\mathcal M)$ does not have a matching of the part $B$. Thus Hall's necessary and sufficient condition for the existence of a matching of $B$ must be violated. An easy consequence of that is that there exists some $j_{\mathcal M}\in \{0,\ldots, s-1\}$, such that $\zeta_i(\mathcal M)\le -j_{\mathcal M}$ for at least $s-j_{\mathcal M}$ indices $i\in [s]$. 
To make the definition of $j_{\mathcal M}$ unambiguous, choose $j_{\mathcal M}$ to be the largest out of all possible values. We treat $j_{\mathcal M}$ as a random variable depending on $\mathcal M.$

We have 
\begin{equation}\label{eqneg}\sum_{i=1}^s\zeta_i(\mathcal M)\cdot I[\zeta_i(\mathcal M)\le 0]\le -j_{\mathcal M}(s-j_{\mathcal M})\end{equation} due to the violation of Hall's condition.

First, let us show that \begin{equation}\label{eqmain0}\E[\zeta_i\mid \zeta_i> 0]< 3.7 \sqrt{s\log s}.\end{equation} For that, we shall employ \eqref{concen2} with $\eta:=\zeta_i+ s-1$ and $\alpha n = \frac{|\ff_i|}{n^k}$. Put $\delta = 1$, $\lambda = 2\sqrt s$ and let $\mu\ge 0$. Then we have
$$
\Prb[\zeta_i> 2\mu + 4\sqrt s]\le
\Prb[\zeta_i>0] \cdot 4\exp\Big(-\frac{\mu^2}{(s+4\sqrt s)/2+2\mu}\Big).
$$
Now we rewrite $\E[\zeta_i\mid \zeta_i>0]$ as follows for some $X > 0$:
$$
\E[\zeta_i\mid \zeta_i>0] = 
\E[\zeta_i\cdot I[\zeta < X]\mid \zeta_i>0] + \E[\zeta_i\cdot I[\zeta \ge X]\mid \zeta_i>0] \le 
X + \sum_{i\ge X} \Prb[\zeta_i \ge i \mid \zeta_i>0].
$$
Then, putting $X = \sqrt{s\log s} + 4\sqrt{s}$, we get
\begin{align*}
\E[\zeta_i\mid \zeta_i>0]\ \le&\
2\sqrt{s \log s} + 4\sqrt s + \sum_{t\ge \sqrt{s\log s}:\  2t+4\sqrt s\in \mathbb N}(2t + 4\sqrt s)\cdot 4\exp\Big(-\frac{t^2}{(s+4\sqrt s)/2+2t}\Big)\\ 
\le&\
2\sqrt{s \log s} + 4\sqrt s + 4 \le
3.7 \sqrt{s \log s}.
\end{align*}
(In the last two inequalities we use that $s \ge 470$ and calculated the corresponding expressions on computer.)

\medskip


Recall that $|\ff_i| > (s-1)n^{k-1}$, and thus $\E \zeta_i = \E [|\ff_i\cap \mathcal M|]-s+1 > 0$.
Then we have 
$$
0<\sum_{i=1}^s\E\zeta_i = \sum_{i=1}^s\E[\zeta_i\mid \zeta_i\le 0]\cdot \Prb[\zeta_i\le 0]+\sum_{i=1}^s\E[\zeta_i\mid \zeta_i> 0]\cdot \Prb[\zeta_i>0].
$$
For simplicity, let us put $x := \lfloor 3.7 \sqrt{s\log s} \rfloor$. Combining this with \eqref{eqneg} and \eqref{eqmain0},  we get that
\begin{equation}\label{eqmain}
    0<-\E [j_{\mathcal M}(s-j_{\mathcal M})]+\sum_{i=1}^s x\Prb[\zeta_i>0]\le -\E [j_{\mathcal M}(s-j_{\mathcal M})]+x \E[j_{\mathcal M}]=-\E [j_{\mathcal M}(s-x-j_{\mathcal M})].
\end{equation}
At the same time, using that $j_{\mathcal M} < s,$ we get 
\begin{equation}\label{eqmain_2}
\E\big[j_{\mathcal M}\cdot I[j_{\mathcal M}<s-x]\big]=\E j_{\mathcal M}-\E\big[j_{\mathcal M}\cdot I[j_{\mathcal M}\ge s-x]\big]\ge \E j_{\mathcal M}-s \Prb[j_{\mathcal M}\ge s-x].
\end{equation}
From here, we get the following chain of inequalities (note that we use $j_{\mathcal M}(s-x-j_{\mathcal M})\ge -sx$ for $j_{\mathcal M}\le s$): 
\begin{align*}\E [j_{\mathcal M}(s-x-j_{\mathcal M})]=&\ \E\big[j_{\mathcal M}(s-x-j_{\mathcal M})\cdot I[j_{\mathcal M}<s-x]\big]+\E\big[j_{\mathcal M}(s-x-j_{\mathcal M})\cdot I[j_{\mathcal M}\ge s-x]\big]\\ \ge&\ \E\big[j_{\mathcal M}\cdot I[j_{\mathcal M}<s-x]\big] -sx\Prb[j_{\mathcal M}\ge s-x]\\ \overset{\eqref{eqmain_2}}{\ge}&\ \E j_{\mathcal M}-s(x+1)\Prb[j_{\mathcal M}\ge s-x]. 
\end{align*}
Together with \eqref{eqmain}, this gives
\begin{equation}\label{eqmain2}s(x+1)\Prb[j_{\mathcal M}\ge s-x]> \E j_{\mathcal M}.\end{equation} 


Note that, by definition, $\Prb[j_{\mathcal M}\ge s-x]\le \sum_{i=1}^s\Prb[\zeta_i\le x - s]$. Combining this with \eqref{eqmain0} and \eqref{eqmain2}, we get
\begin{equation}
    \label{eqmain3}
    \sum_{i=1}^s\E\big[\zeta_i\cdot I[\zeta_i>0]\big]\overset{\eqref{eqmain0}}{\le}
    x\E j_{\mathcal M} \overset{\eqref{eqmain2}}{<}
    sx(x+1)\Prb[j_{\mathcal M}\ge s-x]\le sx(x+1) \sum_{i=1}^s \Prb[\zeta_i\le x-s].
\end{equation}

We shall apply \eqref{concen2} to $\eta = \zeta_i+(s-1)$. This time, we have $\delta=-1$,  $\mu=\frac{s - x - 4\sqrt s}{2}$, $\lambda = 2\sqrt s$ we get that
\begin{align}
\Prb[\zeta_i\le x-s]=&\ 
\Prb[\eta-(s-1) \le x-s] \le
\Prb[\eta\le s-2]\cdot 4\exp\Big(-\frac{(s - x - 4\sqrt{s})^2 }{6 s - 4x - 8\sqrt{s}}\Big)
\notag\\
\label{eqmain4}
\le&\ 
\Prb[\zeta_i \le -1]\cdot 4e^{-z(s)} \le
-4e^{-z(s)}\E[\zeta_i\cdot I[\zeta_i\le 0]],
\end{align}
where $z(s) = \frac{(s - x - 4\sqrt{s})^2 }{6 s - 4x - 8\sqrt{s}} \sim \frac{s}{6}$ (up to terms of order $\sqrt{s\log s}$).
Combining \eqref{eqmain3} with \eqref{eqmain4} and substituting the value of $x$, we get that
$$
\sum_{i=1}^s\E\big[\zeta_i\cdot I[\zeta_i>0]\big]\le
-sx(x+1) \cdot 4 e^{-z(s)} \sum_{i=1}^s \E\big[\zeta_i\cdot I[\zeta_i\le 0]\big].$$
Once $sx(x+1) \cdot 4 e^{-z(s)}<1$, this contradicts the assumption $\sum_{i=1}^s \E\zeta_i>0$. This happens for any $s>470$. It is easy to see that all the other inequalities used in the proof are valid for this choice of $s.$ The theorem is proved.

\section{Discussion}

As we have mentioned in the introduction, the example $\ff_1 = \ldots = \ff_s = [s-1] \times [n]^{k-1}$ shows that the uniform lower bound in Conjecture~\ref{conj1} is tight. In what follows, we explore what happens if we bound the size of each family separately.

\begin{prob}
Let $n \ge s$ and $k$ be positive integers. We call a sequence $f_1 \le \ldots \le f_s$ \emph{satisfying} if any  $\ff_1, \ldots, \ff_s \subset [n]^k$ with $|\ff_i| > f_i$ contain a rainbow matching.
Which sequences $f_1,\ldots,f_s$ are satisfying?
\end{prob}

Theorem~\ref{thmconcen} easily implies that, for some $C > 0$, the sequence $a_1,\ldots, a_s$ with $a_i := (i + C \sqrt{s\log s})n^{k-1}$ is satisfying. (In fact, it implies that with very large probability, for all $i$ simultaneously the family $\ff_i$ intersects a random matching in at least $i$ sets.) A modification of the proof of Theorem~\ref{thm1} implies that the sequence $b_1,\ldots, b_s$ with $b_i := \min(i + C \sqrt{s\log s}, s-1+\eps)n^{k-1}$ is satisfying, where $\eps>0$ can be chosen arbitrarily small, and we need to either take $C \ge C_0(\eps)$ or $s\ge s_0(\eps)$.

It is tempting to suggest that the sequence $0,n^{k-1},\ldots, (s-1)n^{k-1}$ is satisfying. However, this is not the case. Indeed, even the sequence $c_1,\ldots, c_{s}$ with $c_1 =\ldots = c_{s-1} = (s-1)n^{k-1} - (n-1)^{k-1} - 1$, $c_s = (s-1) n^{k-1}$, is not. Indeed, fix a set $F\in [s+1, n]\times [n]^{k-1}$ and consider the following families: $\ff_1 = \ldots = \ff_{s-1} = [s-1] \times [n]^{k-1} \setminus \{F'\in
\{1\}\times[n]^{k-1}\colon F\cap F' = \varnothing\}$, $\ff_s = [s-1]\times [n]^{k-1} \cup \{F\}$.


\begin{conj}
The sequences $d_1,\ldots, d_s$ with $d_i := \min(i + C \sqrt{s\log s}, s-1)n^{k-1}$ and $e_1,\ldots, e_s$ with  $e_i:= i\cdot n^{k-1}$ are satisfying.
\end{conj}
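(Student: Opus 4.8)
The plan is to attack both parts inside the probabilistic framework of Theorem~\ref{thm1}. Fix a uniformly random perfect matching $\mathcal M$ of $k$-sets in $[n]^k$ and form the bipartite incidence graph between its $n$ edges and the $s$ families; a rainbow matching is exactly a matching of this graph saturating $\{\ff_1,\dots,\ff_s\}$, which by Hall's theorem exists iff $|\mathcal M\cap\bigcup_{i\in S}\ff_i|\ge|S|$ for every $S\subseteq[s]$. One first checks that the expectation is always in our favour: ordering the elements of $S$ increasingly as $i_1<\dots<i_m$, for the sequence $e_i=i\,n^{k-1}$ we have $\big|\bigcup_{i\in S}\ff_i\big|\ge|\ff_{i_m}|>i_m n^{k-1}\ge m\,n^{k-1}$, hence $\E\big|\mathcal M\cap\bigcup_{i\in S}\ff_i\big|>|S|$, and the same inequality holds for the $d_i$ sequence. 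The whole problem is to upgrade ``the expectation beats $|S|$ for every $S$'' to ``all of these $2^s$ inequalities hold simultaneously with positive probability''.

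For the $d_i$ sequence only the top $O(\sqrt{s\log s})$ families are genuinely constrained, so I would take the argument that already yields the Discussion's sequence $b_i=\min(i+C\sqrt{s\log s},\,s-1+\eps)n^{k-1}$ and try to drive $\eps$ to zero. There $\eps$ is a cushion absorbing the lower-order ($\sqrt{s\log s}$-sized) error terms produced when Theorem~\ref{thmconcen} is fed into the Hall-deficiency bookkeeping of Section~\ref{sec3}. Removing it should be possible by (i) replacing the per-family tail estimate \eqref{eqmain0} by a joint bound on $\sum_i\zeta_i\cdot I[\zeta_i>0]$ using that only $O(\sqrt{s\log s})$ indices have $d_i=(s-1)n^{k-1}$, and (ii) exploiting the integrality of $\eta$ and of $j_{\mathcal M}$ a little harder, as is already done around \eqref{eqmain2}--\eqref{eqmain3}. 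I expect this part to be technical but to need no new idea.

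The $e_i$ sequence is the hard one. Here the ``critical'' sets are the initial segments $S=\{1,\dots,m\}$, for which $\E\big|\mathcal M\cap\bigcup_{i\in S}\ff_i\big|-|S|$ equals $\big(|\bigcup_{i\le m}\ff_i|-m\,n^{k-1}\big)/n^{k-1}$ and can be as small as $n^{-(k-1)}$, so a plain union bound over deficient sets combined with \eqref{concen1} is hopeless: when the union is a constant fraction of $n^k$ and $n$ is close to $s$, concentration saves only $\exp(-\Theta(s))$, on the wrong side of $2^{-s}$, and when the surplus is $O(1)$ the deviation one must produce is $O(1)$ and concentration saves nothing. The proposal is therefore not to union bound but to generalise the global argument of Section~\ref{sec3}: when $\mathcal M$ has no rainbow matching pick a minimal deficient set $S(\mathcal M)$ --- so $|\mathcal M\cap\bigcup_{i\in S}\ff_i|=|S|-1$, and every edge of $\mathcal M$ lying in some $\ff_i$ with $i\in S$ lies in at least two of them --- use that deficient sets are closed under union to control the \emph{maximal} one, and use monotonicity of $i\,n^{k-1}$ (a deficient set of size $m$ with $\max S=m+t$ has surplus at least $t$, and there are only $\binom{m+t}{t}$ such sets) to trade the number of candidates against the deviation that \eqref{concen1} must supply. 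The ``conditional large deviation'' bound \eqref{concen2} should, as in its own proof, let one condition on a random partial matching first and control the conditional deficiency.

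The main obstacle, as I see it, is precisely the $e_i$ part in the regime $n\asymp s$, $|S|=\Theta(s)$, $|\bigcup_{i\in S}\ff_i|=\Theta(n^k)$: there both the surplus and the variance of $|\mathcal M\cap\bigcup_{i\in S}\ff_i|$ are $\Theta(s)$, so concentration alone yields only $\exp(-\Theta(s))$, and --- unlike in Theorem~\ref{thm1}, where $|\ff_i|>(s-1)n^{k-1}$ forces $\E\zeta_i>0$ for every $i$ --- no fixed centering makes $\E\sum_i\zeta_i>0$, because the small-index families may genuinely be small. I expect that closing this gap requires an ingredient beyond Section~2: either a stability/junta statement in the spirit of \cite{KL,FK4} showing that a deficient configuration of this shape forces the families to resemble the extremal $[s-1]\times[n]^{k-1}$ (which already violates $|\ff_i|>e_i$), or a genuinely different random object in place of a single uniform perfect matching.
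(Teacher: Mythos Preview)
The statement you are attempting to prove is labelled \textbf{Conjecture} in the paper, and the paper provides \emph{no proof} of it; it is stated as an open problem in the Discussion section. There is therefore nothing in the paper to compare your proposal against. Note in particular that the paper observes immediately after stating the conjecture that the $d_i$ part alone would imply Conjecture~\ref{conj1} in full --- whereas Theorem~\ref{thm1} only establishes Conjecture~\ref{conj1} for $s\ge 470$ --- so a proof of the $d_i$ sequence being satisfying would already strictly strengthen the paper's main result.

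Your write-up is, as you yourself make clear, not a proof but a research plan with an explicitly acknowledged gap. For the $d_i$ sequence you propose to sharpen the argument that gives the $b_i$ sequence by eliminating the $\eps$, but you do not carry this out, and ``exploiting integrality a little harder'' is not an argument. For the $e_i$ sequence you correctly diagnose the obstruction: in the regime $n\asymp s$, $|S|=\Theta(s)$, the surplus $\E|\mathcal M\cap\bigcup_{i\in S}\ff_i|-|S|$ can be $O(1)$ while the fluctuation is $\Theta(\sqrt{s})$, so neither \eqref{concen1} nor \eqref{concen2} as stated can control a single deficient set, let alone a union over $2^s$ of them. Your closing paragraph concedes that closing this gap ``requires an ingredient beyond Section~2''. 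That is an honest assessment, but it means what you have is a sketch of why the problem is hard, not a proof.
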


Note that the sequence $d_1,\ldots, d_s$ being satisfying implies Conjecture~\ref{conj1}.

Finally, we note that we may ask a variant of Conjecture~\ref{conj1} for $\ff_1,\ldots, \ff_s\subset [n_1]\times [n_2]\times\ldots\times [n_k]$ for $n_1\le\ldots\le n_k$. There, a natural extremal example would be to fix $s-1$ coordinates in the first part, which is the smallest, and consider the family of all tuples that intersect the first part in one of those coordinates.

A simple induction argument allows to reduce this, seemingly more general, version of Conjecture~\ref{conj1}, to the case $n_1 = \ldots = n_k$. Let us give a sketch of that. Take  families $\ff_1,\ldots, \ff_s\subset [n_1]\times \ldots\times [n_k]$ that do not contain a rainbow $s$-matching. First, using a standard argument, we may assume that the families are {\it shifted} in each part, i.e., if $(f_1,\ldots,f_k)\in \ff_i$ then $(f_1',\ldots, f'_k)\in \ff_i$ for any $f'_1,\ldots, f'_k$ satisfying $f'_j\le f_j$ for every $j$. Given this, we may proceed by induction on $k$ for fixed $s$ and on $\sum_i n_i$ for fixed $s$ and $k$, as long as $n_1<n_k$. We apply the induction hypothesis to $\ff_i(\bar n_k):=\{(a_1,\ldots,a_{k})\in \ff_i: a_k\ne n_k\} $. 
It is easy to see that $\ff_1(\bar n_k),\ldots, \ff_s(\bar n_k)$ do not contain a rainbow $s$-matching, and thus $\min_i|\ff_i|\le (s-1)\cdot n_2\cdot\ldots\cdot n_{k-1}\cdot (n_k-1)$. At the same time, for every $i$, shiftedness imply that the degree of the element $\{n_k\}$ in the $k$-th part is the smallest among $\{1,\ldots, n_k\}$, and thus $|\ff_i(\bar n_k)|\ge \frac {n_k-1}{n_k}|\ff_i|$. Combining these two inequalities, we get that $\min_i|\ff_i|\le (s-1)\cdot n_2\cdot\ldots\cdot n_k,$ as required.

\vskip+0.4cm
{\sc Acknowledgements: }
The authors acknowledge the financial support from the Ministry of Education and Science of the Russian Federation in the framework of MegaGrant no 075-15-2019-1926.

 The research of the second author was directly supported by the IAS Fund for Math, the Director’s Fund and indirectly supported by the National Science Foundation Grant No. CCF-1900460. Any opinions, findings and conclusions or recommendations expressed in this material are those of the authors and do not necessarily reflect the views of the National Science Foundation.


\begin{thebibliography}{}
\bibitem{AH} R. Aharoni and D. Howard, {\it A Rainbow $r$-Partite Version of the Erd\H os--Ko--Rado Theorem,} Comb. Probab.  Comput. 26 (2017), N3, 321--337.

\bibitem{AC} N. Alon and F.R.K. Chung, {\it Explicit construction of linear sized tolerant networks,} Discrete Mathematics 72 (1988), 15--19.

\bibitem{CL} F. Chung, L. Linyuan, {\it Concentration inequalities and martingale inequalities: a survey,} Internet Mathematics 3.1 (2006): 79--127.

\bibitem{EMC} P. Erd\H os, {\it A problem on independent r-tuples}, Ann. Univ. Sci. Budapest. 8 (1965) 93--95.


\bibitem{F4} P. Frankl, \textit{Improved bounds for Erd\H os' Matching Conjecture}, Journ. of Comb. Theory Ser. A 120 (2013), 1068--1072.

\bibitem{F5} P. Frankl, {\it Proof of the Erd\H os matching conjecture in a new range,} Israel Journal of Mathematics 222 (2017), N1, 421--430.

\bibitem{FK3} P. Frankl, A. Kupavskii,  {\it The Erd\H os Matching Conjecture and Concentration Inequalities}, arXiv:1806.08855

\bibitem{FK4} P. Frankl, A. Kupavskii, {\it Simple juntas for shifted families}, 	to appear in Discrete Analysis, arXiv:1901.03816

\bibitem{FK5} P. Frankl, A. Kupavskii, {\it Beyond the Erd\H os Matching Conjecture}, 	arXiv:1901.09278 

\bibitem{HLS} H. Huang, P. Loh, B. Sudakov, \textit{The size of a hypergraph and its matching number}, Combin. Probab. Comput. 21 (2012), 442--450.

\bibitem{KL} N. Keller and N. Lifshitz, {\it The Junta Method for Hypergraphs and Chv\'atal's Simplex Conjecture},  arXiv:1707.02643 (2017).

\bibitem{LY} H. Lu and X. Yu, {\it On rainbow matchings for hypergraphs,} SIAM J. Disrete Math. 32 (2018), N1, 382--393.

\end{thebibliography}
\end{document}